\theoremstyle{plain}
\newtheorem{theorem}{Theorem}[section] % use [section] (or [chapter] in book class) to follow the section (chapter) number
\newtheorem{lemma}[theorem]{Lemma}
\newtheorem{remark}[theorem]{Remark}
\def\BState{\State\hskip-\ALG@thistlm}
\pgfplotsset{compat=1.18}
\def\iid{\overset{\textnormal{iid}}{\sim}} % i.i.d. symbol
\DeclareMathOperator{\sgn}{sgn}
\let\dolarger\relsize} 
\def\dolarger#1{\larger[#1]}} 
\newcommand*\@@bigtimes[2]{\vphantom{\prod} 
	\vcenter{\hbox{\dolarger{4}$\m@th#1\mkern-2mu\times\mkern-2mu$}}} 
\newcommand*\bigtimes{\mathop{\mathpalette\@@bigtimes\relax}\displaylimits} 
\def\iid{\overset{\textnormal{iid}}{\sim}} % i.i.d. symbol
\def\cube{[0,1]^d} % hyper-cube
\def\N{\mathbb{N}}\def\R{\mathbb{R}}\def\1{\mathbbm{1}}
\def\Pcal{\mathcal{P}}\def\Wcal{\mathcal{W}}
\title{\bf Hierarchical Besov-Laplace priors for spatially inhomogeneous binary classification}
\author{Patric Dolmeta\\
	ESOMAS Department, University of Turin\\
	and \\
	Matteo Giordano\thanks{
		M.G. has been partially supported by MUR, PRIN project 2022CLTYP4. The authors gratefully acknowledge the support from the ``de Castro" Statistics Initiative.}\\
	ESOMAS Department, University of Turin}
\date{} % set the desired data within {}, leave empty to avoid displaying the date
\begin{document}
	
	\maketitle
	
	\begin{abstract}
	We study nonparametric Bayesian binary classification, in the case where the unknown probability response function is possibly spatially inhomogeneous, for example, being generally flat across the domain but presenting localized sharp variations. We consider a hierarchical procedure based on the Besov-Laplace priors from the inverse problems and imaging literature, with a carefully tuned hyper-prior on the regularity parameter. We show that the resulting posterior distribution concentrates towards the ground truth at optimal rate, automatically adapting to the unknown regularity. To implement posterior inference in practice, we devise an efficient Markov chain Monte Carlo (MCMC) algorithm based on recent ad-hoc dimension-robust methods for Besov-Laplace priors. We then test the considered approach in extensive numerical simulations, where we obtain a solid corroboration of the theoretical results.
	\end{abstract}

	\vspace{24pt}
	
	\noindent\textbf{Keywords.} Adaptation; Besov spaces; frequentist analysis of Bayesian procedures; minimax-optimal; posterior contraction rates
	
	%
	%\clearpage
	
	\tableofcontents

%%========================================================================================%%
\section{Introduction}\label{Sec:Intro}

Consider the binary classification problem: To predict a $0$-$1$ response $Y$ from the value of a possibly multi-dimensional covariate $X$. This task is canonically approached by modeling $Y$, conditionally given $X$, as a Bernoulli random variable with success probability $p(X)$. Given labeled data $D^{(n)}:=\{(Y_i,X_i)\}_{i=1}^n$, the goal is then to obtain an estimate $\hat p$ of the `probability response function' $x\mapsto p(x) = \Pr(Y = 1 | X = x)$. Using the latter, new unlabeled inputs $X_{n+1},X_{n+2},\dots$, can be classified based on the plug-in estimates $\hat p(X_{n+1}),\hat p(X_{n+2}),\dots$; for example, to belong to class $1$ if the corresponding probabilities are greater than a certain threshold.

In this article, we consider the nonparametric Bayesian approach to the binary classification problem. This entails modeling $p$ with a suitable prior probability measure $\Pi$ on a function space, and then combining $\Pi$ with the data, via Bayes' theorem, to form the posterior distribution $\Pi(\cdot|D^{(n)})$, that is the conditional law of $p|D^{(n)}$. Following the Bayesian paradigm, $\Pi(\cdot|D^{(n)})$ represents the updated belief about $p$ after observing $D^{(n)}$, furnishing point estimates as well as uncertainty quantification. See Section \ref{Sec:Model} for details, and \cite[Chapter 1]{GvdV17} for a general overview of the methodology. The arguably most widely adopted prior distributions for classification surfaces are the ones based on Gaussian processes, for which there exists an extensive literature providing methodological and computational strategies, e.g.~\cite{lenk1988logistic,choudhuri2007nonparametric,nickisch2008approximations}, as well as theoretical performance guarantees, e.g.~\cite{ghosal2006posterior,vdVvZ08,vdVvZ09}. Further, see \cite[Chapter 3]{RW06}. Other commonly used approaches include procedures with Dirichlet process priors, Dirichlet process mixture models, and mixture of experts; see \cite{gelfand1991nonparametric,jara2007dirichlet,wang2010classification}, where many additional references can be found.

In many applications, it is natural to expect that the target probability response function exhibits both general trends as well as localized features, whose correct detection is central to the efficacy of the employed classification procedure. For instance, there may be unknown `critical values' of the covariates that induce sharp variations in the probability of success, resulting in localised high variation or even discontinuities. Notable examples arise in pharmacology or toxicology where the minimum effective dose and the threshold of toxicological concern are well documented critical values affecting the probability of a therapeutic or toxic event, \cite{Li17_tox}. Similarly, the point of material fatigue, the fracture point of elastic materials and their yield point all represent sharp transitional values determining the probability of structural failure monitored by engineers, \cite{alava06}.

To model this scenario, we employ Besov-Laplace priors. These are widely used in inverse problems and imaging, \cite{LSS09}, because of their excellent empirical performances in recovering spatially inhomogeneous objects, as well as their ability to induce sparse reconstructions and provide edge detection at the level of the maximum-a-posteriori (MAP) estimator. See e.g.~\cite{LP01,BD06,VLS09,SE15,KLSS23} and references therein, and Figure \ref{Fig:2D_bs} for an illustration. Throughout, we will refer to `edge-preserving' and `spatial inhomogeneity' in accordance with the common terminology in the aforementioned literature. However, we stress that in the present classification setting, the reference domain is the covariate space, and these effects refer to sharp variations in the probability of success, as discussed in the examples above. 

Besov-Laplace priors are defined via wavelet series with independent random coefficients following rescaled Laplace distributions, furnishing an infinite-dimensional version of the total-variation prior of \cite{ROF92}, while also maintaining a favorable log-concave structure that enhances computation, \cite{BG15}, and analytical study, \cite{ADH21}. Their construction allows the wavelet coefficients to be sparse and so to `activate' high-frequencies only at the location of the critical values, creating `locally sharp' functions without affecting smoothness elsewhere. In contrast, Gaussian priors are suited to model functions with milder variability, and have been shown to be unable to optimally reconstruct more structured signals, \cite{ADH21,giordano2022inability,agapiou2024laplace}. See Section \ref{Sec:Simulations} for an illustration with synthetic data, where the higher empirical reconstruction quality achieved by Besov-Laplace priors over Gaussian priors in the presence of spatially inhomogeneous ground truths can be visualised.

The study of the large sample properties of posterior distributions based on Besov-Laplace priors has been recently initiated in \cite{ADH21}, following the landmark developments in the theory of the frequentist analysis of nonparametric Bayesian procedures over the last two decades, \cite{GGvdV00,shen2001rates,GvdV07,vdVvZ08,GN11}. Among their results, they showed that, in the white noise model, properly tuned Besov-Laplace priors achieve minimax-optimal posterior contraction rates towards ground truths $p_0$ in the Besov scale $B^\alpha_{1}$, $\alpha>0$. These function spaces are defined via wavelet series expansions with $\ell^1$-type penalties on the wavelet coefficients, measuring local variations in an $L^1$-sense and allowing for spatial inhomogeneity. See Section \ref{Subsec:Notation} for definitions, and \cite[Section 1]{DJ98} for a detailed description of the connection to the space of bounded variation functions. These results were later extended to various statistical models, including drift estimation for diffusion processes, \cite{GR22}, density estimation, \cite{giordano23besov}, and nonlinear inverse problems, \cite{agapiou2024laplace}. We further refer the reader to the recent investigation by \cite{dolera2024strong}, as well as to earlier related results by \cite{CN14,R13,AGR13}.

In the context of nonparametric binary classification, the recent work by \cite{giordano2025bayesian} showed that Besov-Laplace priors can yield optimal reconstruction of spatially inhomogeneous probability response functions. However, a limitation of their result is the lack of `adaptation', that is, the (often unrealistic) requirement of knowing the regularity of the ground truth in order to correctly tune the procedure to achieve the optimal rate. See \cite[Chapter 10]{GvdV17} for a general overview of the problem of adaptation in Bayesian nonparametrics. To our knowledge, for Besov-Laplace priors, this issue has so far been investigated only by \cite{giordano23besov} in density estimation, using the hierarchical Bayesian approach, and by \cite{agapiou2024adaptive} in the white noise model for both hierarchical and empirical methods.

Here, we build on the latter studies, and consider a hierarchical prior for classification surfaces obtained by assigning a hyper-prior to the regularity hyper-parameter within a base rescaled Besov-Laplace prior (combined with a suitable link function). We show that the resulting posterior distribution achieves optimal posterior contraction rates towards any $p_0\in B^\alpha_{1}$, simultaneously for all $\alpha$ greater than a minimal threshold, without requiring knowledge of $\alpha$ and thereby adapting to the smoothness of $p_0$, cf.~Theorem \ref{Theo:LaplRates}. The proof is based on the general approach to posterior contraction rates in total variation distance, \cite{GGvdV00}, which we pursue by carefully constructing the hyper-prior for the smoothness. See Appendix \ref{Sec:Proof}.

A secondary contribution of this article is an investigation of the implementation aspects of Besov-Laplace priors for binary classification. Since in the setting at hand the posterior distribution is not available in closed form, we devise an efficient sampling
scheme employing recent ad-hoc dimension robust Markov chain Monte Carlo (MCMC) techniques, \cite{CDPS18}. We then test the considered methodology in several simulation studies, providing a solid corroboration of the theory, cf.~Section \ref{Sec:Simulations}. In the experiments, we consider both spatially homogeneous and inhomogeneous ground truths, in one- and bi-dimensional scenarios.

The remainder of the paper is organized as follows. Section \ref{Subsec:Notation} summarizes basic definitions and the main notation used throughout. The hierarchical Besov-Laplace prior for probability response functions is constructed in Section \ref{Subsec:Prior}. The main asymptotic result is provided in Section \ref{Subsec:LaplRates}. The employed MCMC scheme is outlined in Section \ref{Subsec:Algorithm}. In Section \ref{Sec:Simulations}, we present the simulation studies. A summary of results and outlook on related research questions is included in Section \ref{Sec:Discussion}. The proofs are developed in Appendix \ref{Sec:Proof}.

%
%
%

%%========================================================================================%%
\subsection{Main notation}
\label{Subsec:Notation}

In the following, we take the $d$-dimensional unit cube $\cube, \ d\in\N$, as the primary working domain. For $p\in[1,\infty]$, let $L^p(\cube)$ be the usual Lebesgue spaces of $p$-integrable functions defined on $\cube$, and let $\|\cdot\|_p$ be their norms. Write $\langle\cdot,\cdot\rangle_2$ for the inner product of $L^2(\cube)$.

Let $(\psi_{l}, \ l\in\N)$ be an orthonormal wavelet basis of $L^2(\cube)$, ordered with a single index, comprising $S$-regular, $S\in\N$, compactly supported and boundary corrected Daubechies wavelets; see \cite[Appendix A]{LSS09} for definitions and details. For $\alpha\in[0,S)$ and $p\in[1,\infty)$, define the (wavelet-based) Besov spaces
\begin{align*}
	B^\alpha_{p}(\cube)
	:= \left\{w = \sum_{l=1}^\infty w_l \psi_l :
    \|w\|^p_{B^\alpha_p} := \sum_{l=1}^\infty l^{p(\alpha/d+1/2)-1}|w_l|^p<\infty\right\},
\end{align*}
cf.~\cite[Appendix A]{LSS09}. For $p=\infty$, the spaces $B^\alpha_\infty(\cube)$, $\alpha\ge0$, are defined as above by replacing the $\ell^p$-type norm with the corresponding $\ell^\infty$-type one. The (fixed) regularity $S$ of the underlying wavelet basis can be taken arbitrarily large; thus, we tacitly imply that the condition $\alpha < S$ be satisfied throughout. The traditional Hilbert-Sobolev spaces $H^\alpha(\cube)$ and H\"older spaces $C^\alpha(\cube)$ are known to be contained within the above family. Specifically, for all $\alpha\ge0$, $B^\alpha_{2}(\cube) = H^\alpha(\cube)$, e.g.~\cite[p.~370]{GN16}, and $C^\alpha(\cube)\subseteq B^\alpha_{\infty}(\cube)$, with equality holding when $\alpha\notin\N$, e.g.~\cite[p.~370]{GN16}. On the other hand, for $p=1$, the $B^\alpha_1$-Besov scale is known to suitably model spatially inhomogeneous functions with localized `spiky' or `blocky' features. For example, the space of bounded variation functions $BV(\cube)$, which is of particular interest in many applications, is closely related to $B^1_{1}(\cube)$; see \cite[Section 1]{DJ98}.

When no confusion can arise, we at times omit the underlying domain in the notation, writing, for example, $B^\alpha_{p}$ for $B^\alpha_{p}(\cube)$. We use the symbols $\lesssim,\ \gtrsim$, and $\simeq$ for one- and two-sided inequalities holding up to multiplicative constants that are independent of all the involved quantities. For a set $\Wcal$ and a metric $\delta$ on $\Wcal$, the covering number $N(\varepsilon;\Wcal,\delta)$, $\varepsilon>0$, is defined as the minimum number of balls of $\delta$-radius equal to $\varepsilon$ needed to contain $\Wcal$ in their union. 

%
%
%
%
%

%%========================================================================================%%
\section{Hierarchical Besov-Laplace priors for binary classification}\label{Sec:Model}

Let $D^{(n)}=\{ (Y_i,X_i)\}_{i = 1}^n$ be binary labeled classification data with random design, generated according to the statistical model
\begin{equation}
\label{Eq:Model}
\begin{split}
    Y_i | X_i & \iid \text{Bernoulli}(p(X_i)), \\
    X_i & \iid \mu_X,
\end{split}
\end{equation}
where $\mu_X$ is an absolutely continuous probability distribution on a compact subset of $\R^d$, which we take to be $[0,1]^d$ throughout for convenience. Other compact domains can be treated via appropriate scalings and minor notational changes, while extensions to unbounded covariates and to discrete ones are discussed in Section \ref{Sec:Discussion}. In slight abuse of notation, we also write $\mu_X$ for the probability density function, p.d.f., of the covariates. Above, $p:[0,1]^d \to [0,1]$ is an unknown probability response function, which we also call the `classification surface'. We consider the problem of nonparametrically estimating $p$ from observations $D^{(n)}$. We denote by $Q_p^{(n)}$ the joint (product) law of $D^{(n)}$, and by $E^{(n)}_p$ the expectation with respect to it. The likelihood is given by
\begin{equation}
    \label{Eq:Likelihood}
	L^{(n)}(p) =  \prod_{i = 1}^n p(X_i)^{Y_i} (1 - p(X_i))^{1-Y_i}\mu_X(X_i).
\end{equation}

As $L^{(n)}$ depends on $\mu_X$ only through multiplicative terms that are independent of $p$, knowledge of $\mu_X$ is not required for inference on $p$, cf.~after eq.~\eqref{Eq:Post}. For our theoretical results, we will only impose the minimal requirement that the p.d.f.~$\mu_X$ be bounded and bounded away from zero. In the scenario where estimating $\mu_X$ is also of interest, standard density estimation techniques, e.g.~\cite{T09,GN16,GvdV17} could be used based on the marginal sample $X_1,\dots,X_n$, without impacting likelihood-based inference procedures for $p$.

%

%
%
%

%%========================================================================================%%
\subsection{The prior model}
\label{Subsec:Prior}

We adopt the nonparametric Bayesian approach, cf.~\cite{GvdV17}, modeling $p$ with a prior distribution $\Pi$ supported on the collection $\Pcal$ of measurable functions defined on $[0,1]^d$ and with values in $[0,1]$. In particular, we address the case where $p$ may be spatially inhomogeneous, possibly presenting localized features such as spikes or sharp variations. Since low-integrability Besov spaces provide an effective mathematical model for functions of this type, we make the assumption that $p$ belongs to the $B^\alpha_1$-Besov scale; see Section \ref{Subsec:Notation} for definitions and details. We then employ Besov-Laplace priors, which constitutes a particular case within the general class of `Besov space priors' introduced by \cite{LSS09}. These are commonly used in inverse problems and imaging due to their `edge-preserving and sparsity promoting' properties, e.g.~\cite{LP01,VLS09,KLSS23}, and have been recently shown to yield optimal recovery, in various statistical models, of spatially inhomogeneous functions in Besov spaces; see \cite{ADH21} as well as \cite{GR22,giordano23besov,agapiou2024laplace}.

Building on the latter references, we define priors for classification surfaces starting from rescaled Besov-Laplace random functions
\begin{equation}
\label{Eq:BaseLaplPrior}
    W_n(x) = \frac{1}{n^{d/(2\alpha + d)}}\sum_{l=1}^\infty l^{-(\frac{\alpha}{d}-\frac{1}{2})}  w_l \psi_l(x),
    \quad x\in\cube,
    \quad \alpha>d,
    \quad w_l\iid \text{Laplace},
\end{equation}
with $(\psi_l, \ l\in\N)$ a wavelet basis of $L^2([0,1]^d)$ generating the scale of Besov spaces, cf.~Section \ref{Subsec:Notation}, and where the Laplace distribution has p.d.f.~proportional to $e^{-|t|/2}$, $t\in\R$. In \cite{ADH21}, the law of $W_n$ in \eqref{Eq:BaseLaplPrior} is called a `rescaled $(\alpha-d)$-regular Laplace prior' in view of the fact that its realizations belong almost surely to $B^\beta_p(\cube)$ for all $\beta<\alpha - d$ and all $p\in[1,\infty]$, cf.~\cite[Lemma 5.2]{ADH21}. In the terminology of \cite{LSS09}, \eqref{Eq:BaseLaplPrior} defines a (rescaled) `$B^\alpha_1$-prior'.

In the aforementioned frequentist analysis literature, the smoothness hyper-parameter $\alpha$ was shown to drive the speed of concentration of posterior distributions associated to rescaled Besov-Laplace priors, requiring a precise matching to the regularity of the ground truth to achieve minimax-optimal posterior contraction rates. Since assuming knowledge of the true smoothness is typically unrealistic, here we seek a fully data-driven procedure able to automatically `adapt' to it, achieving optimal performances for a wide range of regularities. To do so, we employ the hierarchical Bayesian approach, e.g.~\cite[Chapter 10]{GvdV17}, assigning a hyper-prior to $\alpha$ in \eqref{Eq:BaseLaplPrior}. Specifically, we model $\alpha\sim\Sigma_n$, where $\Sigma_n$ is an $n$-dependent absolutely continuous distribution supported on the interval $(d,\log n]$, with p.d.f.
\begin{equation}
\label{Eq:Hyperprior}
    \sigma_n(\alpha) = \frac{e^{-n^{d/(2\alpha+d)}}}{\zeta_n},
    \qquad \alpha\in (d,\log n],
\end{equation}
whose normalization constant satisfies $\zeta_n\simeq \log n$. This construction is motivated by previous findings in the literature showing that such hyper-prior distributions enjoy certain universal adaptation properties. See \cite{lember2007universal} for results in density estimation, and \cite{vWvZ16} for drift estimation for diffusion processes. An analogous choice also underpins the adaptive posterior contraction rates for Besov-Laplace priors in density estimation derived by \cite{giordano23besov}. See Remark \ref{Rem:nDependence} for further discussion.

For $W_n$ as in \eqref{Eq:BaseLaplPrior}, and $\alpha\sim \Sigma_n$ with hyper-prior p.d.f.~as in \eqref{Eq:Hyperprior}, we conclude the construction of the prior distribution $\Pi$ for probability response functions, whose range is equal to $[0,1]$, via a transformation through a smooth and strictly increasing link function $H:\R\to[0,1]$. For concreteness, we take the logistic (or `sigmoid') link $H(t) = e^t/(e^t+1)$, $t\in\R$, and let $\Pi_n$ be the law of the random function
\begin{equation}
\label{Eq:FinalPrior}
    p_{W_n} (x) := H[W_n(x)] = \frac{e^{W_n(x)}}{e^{W_n(x)} + 1}, \qquad x \in [0,1]^d.
\end{equation}
Other links (such as the probit $H = \Phi$, with $\Phi$ the standard normal cumulative distribution function) could be used as well under regularity conditions. We call $\Pi_n$ a hierarchical rescaled Besov-Laplace prior for classification surfaces, where the subscript $n$ highlights the dependence of the prior on the sample size, cf.~Remarks \ref{Rem:Resc} and \ref{Rem:nDependence}.

Given labeled binary classification data $D^{(n)}$ from \eqref{Eq:Model}, the posterior distribution $\Pi_n(\cdot|D^{(n)})$ of $p|D^{(n)}$ is then given by Bayes'~formula, 
\begin{equation}
\label{Eq:Post}
    \Pi_n(A|D^{(n)})
    =\frac{\int_A L^{(n)}(p)d\Pi_n(p)}
    {\int_\Pcal L^{(n)}(p)d\Pi_n(p)},
    \qquad A\subseteq\Pcal\ \text{measurable},
\end{equation}
cf.~\cite[p.7]{GvdV17}, with $L^{(n)}$ the likelihood from \eqref{Eq:Likelihood}. Since the latter depends on $\mu_X$ only through the multiplicative terms $\mu_X(X_i)$, $\Pi_n(\cdot|D^{(n)})$ is independent of $\mu_X$. Following the Bayesian paradigm, $\Pi_n(\cdot|D^{(n)})$ represents the updated belief about $p$ after observing the data and furnishes point estimates and uncertainty quantification. See Section \ref{Sec:Simulations} for a concrete illustration with synthetic data.

\begin{remark}[Rescaling]%%==============================================================%%
\label{Rem:Resc}
Similar rescaling terms to the sequence $n^{-d/(2\alpha+d)}$ introduced in \eqref{Eq:BaseLaplPrior} underpin essentially all existing frequentist analyses of Besov-Laplace priors, e.g.~\cite{ADH21,giordano23besov,agapiou2024laplace}. By uniformly shrinking the prior draws, this enforces additional regularization and yields tight complexity bounds for a properly chosen sequence of `sieves', that is, subsets of the parameter space of high prior probability, that play a crucial role in the pursuit of the testing approach to posterior contraction rates, e.g.~\cite{GvdV17}. See the discussion after Theorem 1 in \cite{giordano23besov} for further insights. In a recent investigation by \cite{dolera2024strong},  it was shown via a different proof strategy that non-rescaled (in particular, $n$-independent) Besov-Laplace priors can attain optimal posterior contraction rates. However, these results only apply to the simpler white noise model, and it remains unclear whether they could be extended to the binary classification problem at hand.
\end{remark}%%===========================================================================%%

\begin{remark}[$n$-dependent priors]%%==============================================================%%
\label{Rem:nDependence}

The above hierarchical rescaled Besov-Laplace prior $\Pi_n$ depends on the sample size through both the rescaling sequence in \eqref{Eq:BaseLaplPrior} and the hyper-prior choice in \eqref{Eq:Hyperprior}. This is a departure from a genuine single projective Bayesian model, motivated by our asymptotic analysis, where it leads to optimal frequentist adaptation properties, similarly to the previous findings of \cite{lember2007universal, vW19, giordano23besov}, among the others. Alternative hierarchical priors, based on $n$-independent gamma-type hyper-priors, have been studied in \cite{agapiou2024adaptive} for the more tractable white noise model. Obtaining adaptive posterior contraction rates with these arguably more natural constructions in the present setting is an interesting open problem for future research.
%\textcolor{ForestGreen}{The dependence of the prior on the data size is a common strategic departure from a single projective Bayesian model towards a sequence of priors which helps guaranteeing desirable frequentist properties for posterior distributions. Its use in hierarchical prior constructions is well documented in \cite{Ghosal2008selection} and \cite{vdVvZ08}, among others, where it ensures sufficient penalization for priors to satisfy the concentration requirements for the testing approach to posterior concentration. Alternative hierarchical constructions, based on $n$-independent truncated Gamma and exponential distributions, have been developed in \cite{savva2024} for the more tractable white noise model. The possibility of adapting their construction to the classification framework faces technical challenges, not addressed in this instance.}

\end{remark}%%===========================================================================%%

%
%
%

%%========================================================================================%%
\subsection{Adaptive posterior contraction rates}
\label{Subsec:LaplRates}

In this section, we characterize the asymptotic behavior of the posterior distribution as $n\to\infty$ under the frequentist assumption that the data $D^{(n)}\sim Q_{p_0}^{(n)}$ have been generated as in \eqref{Eq:Model} by some possibly spatially inhomogeneous true probability response function $p_0\in B_1^{\alpha_0}(\cube)$, for some $\alpha_0> d$. The following result quantifies, via the notion of `posterior contraction rates', cf.~\cite[Chapter 8]{GvdV17}, the speed at which $\Pi_n(\cdot|D^{(n)})$ concentrates around $p_0$ in $L^1$-distance.

\begin{theorem}%%==========================================================================%%
\label{Theo:LaplRates}
    Let $\Pi_n$ be a hierarchical rescaled Besov-Laplace prior for probability response functions constructed as in Section \ref{Subsec:Prior}. Let $D^{(n)} = \{(Y_i, X_i)\}^n_{i=1}\sim Q_{p_0}^{(n)}$ be a random sample of labeled binary classification data arising from  \eqref{Eq:Model} for some fixed $p = p_0 \in B^{\alpha_0}_1([0,1]^d)$ for some $\alpha_0>d$, satisfying $\inf_{x\in[0,1]^d} p_0(x) > 0$. Then, for $M>0$ large enough, as $n \to \infty$,
    \begin{equation*}
        E^{(n)}_{p_0} \left[\Pi_n \left( p : \| p - p_0 \|_1 > M n^{-\frac{\alpha_0}{2 \alpha_0 + d}} \Big|  D^{(n)} \right)\right] \to 0.
    \end{equation*}
\end{theorem}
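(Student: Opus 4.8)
The plan is to verify the three sufficient conditions of the general theory of posterior contraction rates for i.i.d.\ observations, e.g.\ \cite[Chapter 8]{GvdV17} (following \cite{GGvdV00}), at the rate $\varepsilon_n = n^{-\alpha_0/(2\alpha_0+d)}$, and then transfer the resulting Hellinger contraction to the $L^1$-distance on $p$. First I would reparametrise through the link, writing $p_0 = H(w_0)$ with $w_0 := H^{-1}(p_0)$; since $\alpha_0 > d$ yields the embedding $B^{\alpha_0}_1 \hookrightarrow C^{\alpha_0 - d}$, the surface $p_0$ is continuous and, taking values in a compact subinterval of $(0,1)$ (using $\inf p_0>0$), the function $w_0$ is bounded and inherits the $B^{\alpha_0}_1$-regularity of $p_0$ via composition with the smooth map $H^{-1}$. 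The benefit of this reparametrisation is that, on the event $\|W - w_0\|_\infty \le \delta$, the draw $p = H(W)$ stays uniformly bounded inside $(0,1)$ and the statistical distances of the classification model become comparable to norms of $W - w_0$: by the Lipschitz property of $H$ and the boundedness of $\mu_X$, both the Kullback--Leibler divergence $\mathrm{KL}(Q^{(1)}_{p_0},Q^{(1)}_{p})$ and the associated second-moment discrepancy (the $V$-type term in the neighbourhoods of \cite[Chapter 8]{GvdV17}) are $\lesssim \|p - p_0\|_2^2 \lesssim \|W - w_0\|_\infty^2$, while in the reverse direction the per-observation squared Hellinger distance satisfies $h^2(Q^{(1)}_{p_0}, Q^{(1)}_p) \gtrsim \|p - p_0\|_2^2 \ge \|p - p_0\|_1^2$. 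It therefore suffices to establish contraction at rate $\varepsilon_n$ in Hellinger distance.

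The heart of the argument, and the step where the hyper-prior design earns the adaptation, is the prior-mass (small-ball) condition $\Pi_n(\|W - w_0\|_\infty \le \varepsilon_n) \ge e^{-C n \varepsilon_n^2}$. The key numerical coincidence is that $n\varepsilon_n^2 = n^{d/(2\alpha_0 + d)}$, which is exactly the exponent appearing in the hyper-prior density $\sigma_n(\alpha) = e^{-n^{d/(2\alpha+d)}}/\zeta_n$ near $\alpha = \alpha_0$. I would restrict the integration over the regularity to a short interval $I_n = [\alpha_0, \alpha_0 + 1/\log n]$ (legitimate since $\alpha_0 \in (d, \log n]$ for all $n$ large). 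On $I_n$ the hyper-prior is controlled from below, $\sigma_n(\alpha) \ge e^{-n^{d/(2\alpha_0+d)}}/\zeta_n = e^{-n\varepsilon_n^2}/\zeta_n$, while for each fixed $\alpha \in I_n$ the base rescaled Besov-Laplace prior obeys a sup-norm small-ball estimate $\Pi^{\alpha}_n(\|W - w_0\|_\infty \le \varepsilon_n) \ge e^{-c\, n\varepsilon_n^2}$, imported from the concentration-function bounds for Laplace series priors in \cite{ADH21} (see also \cite{giordano23besov}); the decentering of the concentration function at $w_0$ is handled by approximating $w_0 \in B^{\alpha_0}_1$ at a suitable wavelet resolution, an estimate that stays valid uniformly over $I_n$ because $\alpha - \alpha_0 \le 1/\log n$. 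Integrating the product over $I_n$ gives $\Pi_n(\|W - w_0\|_\infty \le \varepsilon_n) \ge (|I_n|/\zeta_n)\, e^{-(1+c)n\varepsilon_n^2} \ge e^{-C n\varepsilon_n^2}$, the polylogarithmic factor being absorbed into a slightly larger constant.

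Next I would build sieves $\Pcal_n$ adapted to the hierarchical structure: in the coefficient parametrisation, sets of band-limited $W$ at resolution $L_n$ with a rescaled $\ell^1$-type norm bound $M_n$, taken in union over $\alpha \in (d, \log n]$ and mapped through $H$. Two estimates are then required. The entropy bound $\log N(\varepsilon_n, \Pcal_n, \|\cdot\|_2) \lesssim n\varepsilon_n^2$ rests on the favourable metric entropy of rescaled $B^\alpha_1$-balls in $L^2$---precisely the low-integrability feature that makes these priors suited to spatially inhomogeneous targets---the union over $\alpha$ up to $\log n$ costing only logarithmic factors. The remaining-mass bound $\Pi_n(\Pcal\setminus\Pcal_n) \le e^{-C' n\varepsilon_n^2}$, with $C'$ as large as needed relative to the constant from the prior-mass step, follows from the sub-exponential tails of the i.i.d.\ Laplace coefficients together with the rescaling, after integrating against the hyper-prior, whose exponential factor additionally suppresses the contribution of large $\alpha$. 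Calibrating $L_n$ and $M_n$ so that both estimates hold simultaneously is routine but must be done with care.

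Finally, the entropy bound on $\Pcal_n$ yields, by the standard Hellinger testing construction for i.i.d.\ models, exponentially powerful tests separating $Q_{p_0}$ from $\{p \in \Pcal_n : h(Q^{(1)}_p, Q^{(1)}_{p_0}) > M\varepsilon_n\}$; assembling these with the prior-mass and remaining-mass bounds via the general theorem gives $E^{(n)}_{p_0}\Pi_n(\, h(Q^{(1)}_p, Q^{(1)}_{p_0}) > M\varepsilon_n \mid D^{(n)}) \to 0$, and the Hellinger-to-$L^1$ comparison from the first step converts this into the claimed statement. I expect the main obstacle to be the prior-mass step: verifying the sup-norm small-ball estimate for the non-Gaussian, heavier-tailed Laplace series prior and, above all, controlling the decentering at $w_0$ uniformly over $I_n$ so that it meshes exactly with the $e^{-n^{d/(2\alpha+d)}}$ hyper-prior penalty. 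The delicate point is the quantitative matching that makes $n\varepsilon_n^2$ appear simultaneously in the small-ball exponent and in the hyper-prior normalisation; getting the approximation of $w_0$ and the choice of $I_n$ to cooperate is what drives the adaptation and is the part most likely to require the careful hyper-prior construction alluded to in the introduction.
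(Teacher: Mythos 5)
Your overall architecture matches the paper's: the \cite{GGvdV00}/\cite[Theorem 8.9]{GvdV17} program, the reparametrisation $w_0 = H^{-1}\circ p_0$ with $w_0\in B^{\alpha_0}_1$, the reduction of the KL-type neighbourhoods to $L^2$/sup-norm balls around $w_0$, and above all the prior-mass step, which you execute exactly as the paper does (restrict the hyper-prior integral to $[\alpha_0,\alpha_0+1/\log n]$, use monotonicity of $\sigma_n$ and the decentered concentration inequalities for rescaled Laplace series from \cite{giordano23besov,ADH21}, and absorb the $1/(\zeta_n\log n)$ factor). The paper works directly in total variation, which for this model equals $\|p_1-p_2\|_{L^1(\mu_X)}$, rather than passing through Hellinger; your lower bound $h^2\gtrsim\|p-p_0\|_2^2$ requires $p$ bounded away from $0$ and $1$ over the sieve, which is not guaranteed, but the universally valid $h\gtrsim TV$ rescues that step, so this is only a cosmetic difference.

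The genuine gap is in the sieve/remaining-mass step, where you have the mechanism of the hyper-prior reversed. The density $\sigma_n(\alpha)\propto e^{-n^{d/(2\alpha+d)}}$ is \emph{increasing} in $\alpha$: it is essentially flat for large $\alpha$ and exponentially tiny near $\alpha=d$. Its job is therefore to suppress the \emph{low}-regularity region, not the large-$\alpha$ one as you assert. This matters because your sieve is a union over all $\alpha\in(d,\log n]$, and for $\alpha$ close to $d$ the conditional draws are only $(\alpha-d)$-regular, so any set capturing all but $e^{-Kn\varepsilon_n^2}$ of their mass has $\varepsilon_n$-entropy polynomially larger than $n\varepsilon_n^2$; the union over $\alpha$ does \emph{not} cost only logarithmic factors, and the entropy bound as you state it fails. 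The paper's Lemma \ref{Lem:Sieves} resolves this by splitting at $\alpha_*=\alpha_0/(1+A_2/\log n)$: the hyper-prior mass of $(d,\alpha_*)$ is bounded by $\alpha_*\sigma_n(\alpha_*)/1 \le e^{-c_1 n\varepsilon_{\alpha_*,n}^2}\le e^{-(K+1)n\varepsilon_{\alpha_0,n}^2}$ for $A_2$ large (this is where the exponential factor is actually spent), and only for $\alpha\ge\alpha_*$ is a single sieve constructed --- an $L^1$-enlargement of a $B^{\alpha_*+d}_1$-ball, whose entropy is $\lesssim n^{d/(2\alpha_*+d)}\simeq n\varepsilon_{\alpha_0,n}^2$ precisely because $\alpha_*$ is within $O(1/\log n)$ of $\alpha_0$. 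Without this excision of $(d,\alpha_*)$ your plan does not close; the two-component structure of the sieve (needed because a Laplace series prior does not concentrate on a fixed Besov ball of the right radius) is a further technical point your single band-limited ball glosses over.
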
%%============================================================================%%

Theorem \ref{Theo:LaplRates} entails that, with $Q_{p_0}^{(n)}$-probability tending to one, $\Pi_n(\cdot|D^{(n)})$ puts all of its probability mass in small neighborhoods of $p_0$ with $L^1$-radius shrinking at rate $n^{-\alpha_0/(2 \alpha_0 + d)}$. Consequently, draws from the posterior distribution provide increasingly better reconstructions of the ground truth.

The rate $n^{-\alpha_0/(2 \alpha_0 + d)}$ is known to be optimal, in the minimax sense, for the $L^1$-distance over the Besov space $B^{\alpha_0}_1(\cube)$, e.g.~\cite{DJ98}. In Theorem \ref{Theo:LaplRates}, this is achieved via a nonparametric Bayesian procedure that does not require knowledge of the true regularity, but rather adapts to $\alpha_0$ in the wide range $(d,\infty)$. We then conclude that hierarchical rescaled Besov-Laplace priors achieve adaptive posterior contraction rates in binary classification. This is in line with the existing adaptation results for hierarchical Besov-Laplace priors in density estimation, \cite{giordano23besov}, and in the white noise model, \cite{agapiou2024adaptive}.

For spatially homogeneous true probability response functions belonging to traditional H\"older spaces, \cite{vdVvZ09} proved adaptation for hierarchical Gaussian priors with random length-scale. Our result provides a parallel to this for ground truths in the $B^\alpha_1$-Besov scale and hierarchical rescaled Besov-Laplace priors. On the other hand,  Gaussian priors have been shown to be unable to optimally reconstruct spatially inhomogeneous functions, \cite{agapiou2024laplace}, and thus cannot be expected to achieve optimal posterior contraction rates, even non-adaptive ones, in the setting of Theorem \ref{Theo:LaplRates}, regardless of any specific tuning or randomization. We provide a numerical illustration of this phenomenon in Section \ref{Sec:Simulations} via synthetic data.

\begin{remark}[Boundedness away from zero]%%=============================================%%
In Theorem \ref{Theo:LaplRates}, the assumption that the ground truth be positive throughout the domain guarantees that $p_0$ is in the range of the composition with respect to the link function $H$. Since, by construction, the prior $\Pi_n$ is supported over such functional compositions, this restriction appears to be necessary for posterior consistency in the considered setting. Similar assumptions are also imposed for the frequentist analysis of Gaussian priors for binary classification developed by \cite{vdVvZ08}. We refer the reader to \cite[Chapter 9.5.6]{GvdV17} for results for priors based on the Dirichlet process in the case where the true probability response function is not necessarily bounded away from zero. Investigating such scenario in the presence of spatial inhomogeneity is an interesting open question.
\end{remark}%%============================================================================%%

%
%
%

%%========================================================================================%%
\subsection{Posterior sampling}
\label{Subsec:Algorithm}

For the observation model \eqref{Eq:Model}, the posterior distribution resulting from the considered prior $\Pi_n$ is not available in closed form. We then employ an MCMC method to draw approximate samples from $\Pi_n(\cdot|D^{(n)})$ and concretely implement posterior inference.

Specifically, within a Gibbs-type scheme to handle the hierarchical construction, we resort to the `whitened pre-conditioned Crank-Nicolson' (wpCN) algorithm of \cite{CDPS18}, which is a Metropolis-Hastings-type technique applicable to prior distributions that can be expressed (in our case, conditionally) as transformation of a Gaussian white noise. For the prior $\Pi_n$ from Section \ref{Subsec:LaplRates}, we observe that, for fixed $\alpha$, the random function $W_n$ in \eqref{Eq:BaseLaplPrior} is equal in distribution to
\begin{equation}
\label{Eq:WhiteXi}   
    T^{(n)}_{\alpha}(\xi)(x) := \sum_{l=1}^\infty T^{(n)}_{\alpha,l}(w_l) \psi_l(x), 
    \qquad x\in\cube
    \qquad w_l\iid N(0,1),
\end{equation}
where $\xi$ is a Gaussian white noise process indexed by $[0,1]^d$ given by
\begin{equation}
\label{Eq:Xi} 
    \xi(x) := \sum_{l=1}^\infty w_l \psi_l(x),
    \qquad x\in\cube,
    \qquad w_l\iid N(0,1),
\end{equation}
and where the `whitening transformation' $T^{(n)}_\alpha$ is defined by
$$
    T^{(n)}_{\alpha,l}(w) 
    := n^{-\frac{d}{2\alpha+d}}l^{-(\frac{\alpha}{d}-\frac{1}{2})} 
    \sgn(w) \left[ -\log(2 - 2\Phi(|w|) \right],
    \qquad w\in\R,
    \qquad l\in\N.
$$

Starting from some initialization for $\alpha$ (e.g.~a fixed `cold start' $\alpha =d+1$), and given an initial white noise sample $\xi_0$ (obtained from \eqref{Eq:Xi} by drawing independent standard normal random coefficients), with $\omega_0 := T^{(n)}_{\alpha_0}(\xi_0)$ the corresponding initialization for $w=H^{-1}\circ p$, each step of the wpCN-within-Gibbs algorithm alternates samples from:
\begin{enumerate}
    \item The full conditional distribution of the smoothness parameter $\alpha$, via a standard random walk Metropolis-Hastings algorithm, namely:
    \begin{itemize}
        \item propose $\alpha_* := \max\{\alpha_{s-1} + \delta_1 Z,d\}$, where $\delta_1 > 0$ is a fixed step-size and $Z$ is an independent standard Gaussian random variable.
        \item Set
        $$
            \alpha_s:=
            \begin{cases}
                \alpha_*, & \textnormal{with probability}\ \min\left\{1, \frac{L^{(n)}(H\circ T_{\alpha_*}^{(n)}(\xi_{s-1}))}
                {L^{(n)}(H\circ T_{\alpha_{s-1}}^{(n)}(\xi_{s-1}))} \times \frac{\sigma_n( \alpha_*)}{\sigma_n(\alpha_{s-1})}\right\},\\
                \alpha_{s-1}, & \text{otherwise},
            \end{cases}
        $$
    with $L^{(n)}$ the likelihood from \eqref{Eq:Likelihood} and $\sigma_n$ the hyper-prior p.d.f.~from \eqref{Eq:Hyperprior}.
    \end{itemize}

    \item The full conditional distribution of the infinite-dimensional parameter $\omega$ via the wpCN algorithm, namely:
    \begin{itemize}
        \item Construct the whitened proposal $\xi^* :=\sqrt{1-2\delta_2}$ $\xi_{s-1} + \sqrt{2\delta_2} \chi$, where $\delta_2\in(0,1/2)$ is a fixed step-size and $\chi$ is an independent Gaussian white noise.
        \item Set
        $$
            \xi_{s}:=
            \begin{cases}
                \xi^*, & \textnormal{with probability}\ \min\left\{1, \frac{L^{(n)}(H\circ T_{\alpha_s}^{(n)}(\xi^*))}
                {L^{(n)}(H\circ \omega_{s-1})}\right\},\\
                \xi_{s-1}, & \text{otherwise}.
            \end{cases}
        $$
        \item Set $\omega_{s} = T^{(n)}_{\alpha_s}(\xi_{s})$.
    \end{itemize}
\end{enumerate}

In practice, we implement the first step above by truncating the series in \eqref{Eq:WhiteXi} and \eqref{Eq:Xi} at some pre-specified level $L\in\N$, sufficiently high as to guarantee that the resulting numerical approximation error is negligible compared to the statistical one (e.g.~taking $L$ proportional to $n$). The second operation necessitates the evaluation of the proposal likelihood, which is straightforward for the observation model \eqref{Eq:Model}, cf.~\eqref{Eq:Likelihood}. We note that as $L^{(n)}$ depends on the covariate p.d.f.~$\mu_X$ only through the multiplicative terms $\mu_X(X_i)$, computing the acceptance probabilities does not require knowledge of $\mu_X$.

The resulting Markov chain $(\alpha_s,\omega_s)_{s=0}^\infty$ has limiting distribution equal to the joint posterior distribution of $(\alpha,w)$, \cite{CDPS18}. Moreover, the underlying pCN-type structure is known to give rise to dimension-robust acceptance probabilities, \cite{CRSW13}. This implies desirable mixing properties and efficient convergence towards equilibrium, even under high discretization dimensions (i.e., truncation levels).

\begin{remark}[Computational complexity]
The computational complexity of each step within the above MCMC scheme is driven by the proposal of the whitened parameter $\xi^*$ and the evaluation of two likelihood ratios for the acceptance probabilities. The cost of these operations indirectly depends on the covariate dimension $d$, through the choice of the truncation level $L$ for the series \eqref{Eq:WhiteXi} and \eqref{Eq:Xi}. For standard multi-resolution wavelets, typically $L = 2^{kd}$ for some $k < K$, where $K$ is the higher scaling resolution. By pre-computing the wavelet values $\psi_l(X_i)$ at each observed covariate, evaluating likelihoods of the form \eqref{Eq:Likelihood} then reduces to multiplications of $L$-dimensional vectors (of wavelet coefficients) by an $L\times n$ design matrix.

For high-dimensional applications, where this could still represent an important computational bottleneck, geometric sparsity properties of specific wavelet bases, like compactly supported Daubechies wavelets or Haar functions, could allow to mitigate such exponential dependencies. By detecting `active nodes', namely ones where $\psi_l(X_i) \neq 0$, the proposal can be reduced to only the corresponding `active' coefficients, and sparse matrix algebra can be deployed to handle the resulting sparse pre-computed wavelet design matrix. We did not explicitly pursue such computational optimisations in the present work.
\end{remark}

%
%
%

%%========================================================================================%%
\section{Simulation studies}\label{Sec:Simulations}

We assess the empirical performance of the considered hierarchical rescaled Besov-Laplace prior for binary classification via numerical experiments. For one- and bi-dimensional domains $[0,1]^d$, $d=1,2$, we fix spatially homogeneous and inhomogeneous true probability response functions $p_0$, generate independent and identically distributed (i.i.d.) inputs $X_i\iid\text{Uniform}([0,1]^d)$, conditionally on which we  sample the labels $Y_1,\dots,Y_n$ according to model \eqref{Eq:Model}. We then perform posterior inference via the wpCN-within-Gibbs MCMC algorithm for posterior sampling outlined in Section \ref{Subsec:Algorithm}.

For comparison, we also consider a hierarchical Gaussian prior defined via a centered stationary Gaussian process $G:=(G_x, \ x\in[0,1]^d)$ with square-exponential covariance kernel, whose length-scale is assigned an inverse-Gamma hyper-prior,
\begin{align*}
    E\left[ G_xG_y|A \right] = e^{-A|x - y|^2},
    \qquad x,y\in\cube,
    \qquad A^d\sim \Gamma(1,1).
\end{align*}
Combined with suitable link functions $H$ (including e.g.~the logistic one), this was shown by \cite{vdVvZ09} to achieve adaptive posterior contraction rates towards true probability response functions with traditional H\"older regularity. However, it is generally expected to be unable to optimally recover spatially inhomogeneous ground truths, in view of the known sub-optimality of Gaussian priors in this case, cf.~the discussion after Theorem \ref{Theo:LaplRates}.

Posterior inference with the above hierarchical Gaussian prior is implemented via a Metropolis-within-Gibbs MCMC sampling algorithm similar to the one from Section \ref{Subsec:Algorithm}, where the wpCN routine for Besov-Laplace priors used therein is replaced by the standard pCN method, \cite{CRSW13}. All the numerical experiments were carried out in \texttt{R} on an Intel(R) Core(TM) i7-10875H 2.30GHz processor with 32 GB of RAM. Each MCMC run was iterated for 25,000 steps, discarding the first 10,000 as burn-in. Maximum per-experiment computation times were of around 20 minutes.

%
%
%

%%========================================================================================%%
\subsection{Univariate experiments}\label{Subsec:1DExp}

We start considering a univariate spatially homogeneous scenario, setting
\begin{equation}
\label{Eq:1DTruth}
    p_0(x) = \frac{1}{1 + e^{- (9x - 5)}},
    \qquad x\in[0,1],
\end{equation}
namely a scaled and shifted sigmoid function, restricted to the unit interval $[0,1]$. See Figure \ref{Fig:1DEstim}. With this setup, we sampled labeled binary classification data $D^{(n)}$ from model \eqref{Eq:Model}, with $\mu_X = \text{Uniform}([0,1])$ and increasing sample sizes $n = 50,200,1000,5000$. The observations are represented by the rugs in Figure \ref{Fig:1d_block}.

We then performed posterior inference based on the hierarchical rescaled Besov-Laplace priors from Section \ref{Subsec:Prior} and the hierarchical Gaussian prior described above (for which the logistic link function was also chosen). Figure \ref{Fig:1DEstim} shows the obtained (MCMC approximations to the) `posterior means' $\bar p_n := H\circ E^{\Pi_n}[w|D^{(n)}]$ for $n = 200,1000,5000$. Uncertainty is quantified and visualized by the associated point-wise $95\%$-credible intervals. As expected from Theorem \ref{Theo:LaplRates} and the existing theory for hierarchical Gaussian priors, \cite{vdVvZ09}, both procedures achieve convergence towards the spatially homogeneous ground truth, with practically indistinguishable reconstructions at the largest sample size. This visual comparison is corroborated by Table \ref{Tab:1DEstim}, where $L^1$-estimation errors, averaged over 50 replications of each experiment, are reported, jointly with the corresponding standard deviations.

For the Besov-Laplace prior, we used Daubechies-8 maximally symmetric (i.e.~`Symmelet-8') functions, with symmetric boundary reflection, implemented in the $\texttt{R}$ package $\texttt{wavethresh}$, truncating the series after the first $1,024$ terms. All MCMC runs were initialized at cold, uninformative starts. The step sizes for the wpCN and pCN routines were chosen within the range $[0.001,0.05]$, depending on the sample size, to achieve a stabilization of the acceptance probabilities at around $30\%$ after burn-in.  

\begin{figure}[H]%%=======================================================================%%
	\centering
	\includegraphics[width=\linewidth]{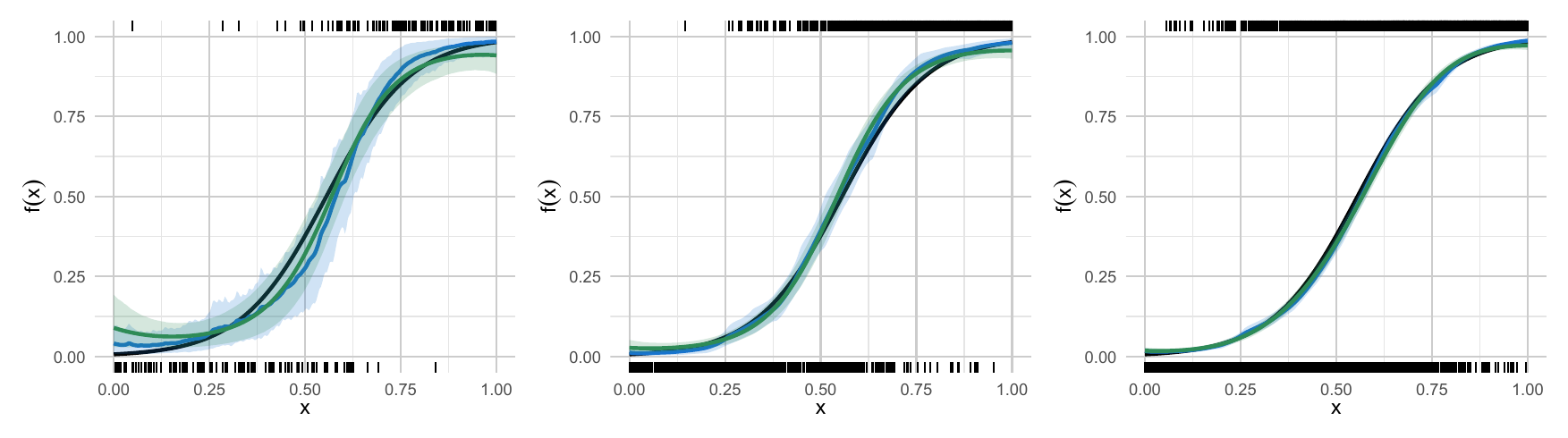}
	\caption{Left to right: Posterior means for Gaussian (solid green) and Besov-Laplace (solid blue) priors, pointwise $95\%$-credible intervals (shaded regions of the corresponding colors) for $n = 200, 1000, 5000$, respectively. The ground truth $p_0$ from \eqref{Eq:1DTruth} is shown in solid black. Rugs at the bottom represent the covariate values labeled $0$, while rugs at the top represent covariates labeled $1$.}
    \label{Fig:1DEstim}
\end{figure}%%============================================================================%%

\begin{table}[!ht]%%======================================================================%%
    \centering
    \begin{tabular}{rr|rrrrr}
    & $n$ & 50 & 200 & 1000 & 5000  \\  
    \hline
    \multirowcell{2}[0pt][l]{Gaussian} & $\| \bar p_n - p_0 \|_1$ 
    & 0.16 (0.02) & 0.05 (0.01) & 0.02 (0.005) & 0.01 (0.003) \\
    & $\| \bar p_n - p_0  \|_1/
    \| p_0\|_1$ 
    & 0.27 (0.03) & 0.09 (0.03) & 0.04 (0.008) & 0.02 (0.005)  \\ 
    \hline
    \multirowcell{2}[0pt][l]{Laplace} & $\| \bar p_n - p_0 \|_1$ 
    & 0.21 (0.05) & 0.05 (0.02) & 0.02 (0.007) & 0.01 (0.003) \\
    & $\| \bar p_n - p_0  \|_1/
    \| p_0\|_1$ 
    & 0.37 (0.09) & 0.09 (0.03) & 0.04 (0.011) & 0.02 (0.005)  \\ 
    \hline
    \end{tabular}
    \caption{Average $L^1$-estimation errors for the posterior mean (and their standard deviations) over 50 repeated experiments with the ground truth $p_0$ from \eqref{Eq:1DTruth}.}
    \label{Tab:1DEstim}
\end{table}%%=============================================================================%%

We next consider the step-like spatially inhomogeneous ground truth
\begin{equation}
    p_0(x) = \begin{cases}
    0.9 \qquad x\in[0,0.4) \\
    0.2 \qquad x\in[0.4,0.7) \\
    0.5 \qquad x\in[0.7,1],
    \end{cases}
    \qquad x\in[0,1]
    \label{Eq:1D_block}
\end{equation}
cf.~Figure \ref{Fig:1d_block} below, for which the obtained results are summarized in Figure \ref{Fig:1d_block} below. Unlike the preceding spatially homogeneous scenario, here a marked difference between the performance of the two procedures emerges. In particular, the hierarchical Gaussian prior appears to be unable to correctly detect the sharp jumps at inputs $x = 0.4,0.7$, significantly over-smoothing the edges of the blocks even as the sample size increases. This is in line with the known sub-optimality of Gaussian priors, \cite{agapiou2024laplace}, and generally of linear procedures, \cite{DJ98}, in the presence of spatial inhomogeneity. On the contrary, the hierarchical rescaled Besov Laplace priors achieves progressively more faithful reconstructions. Table \ref{Tab:1d_block} reports  the obtained average $L^1$-estimation errors for the posterior mean, where the latter procedure is shown to outperform the former across all values of $n$, with a more significant improvement in the recovery at the largest sample size. 

\begin{figure}[H]%%=======================================================================%%
\centering
\includegraphics[width=\linewidth]{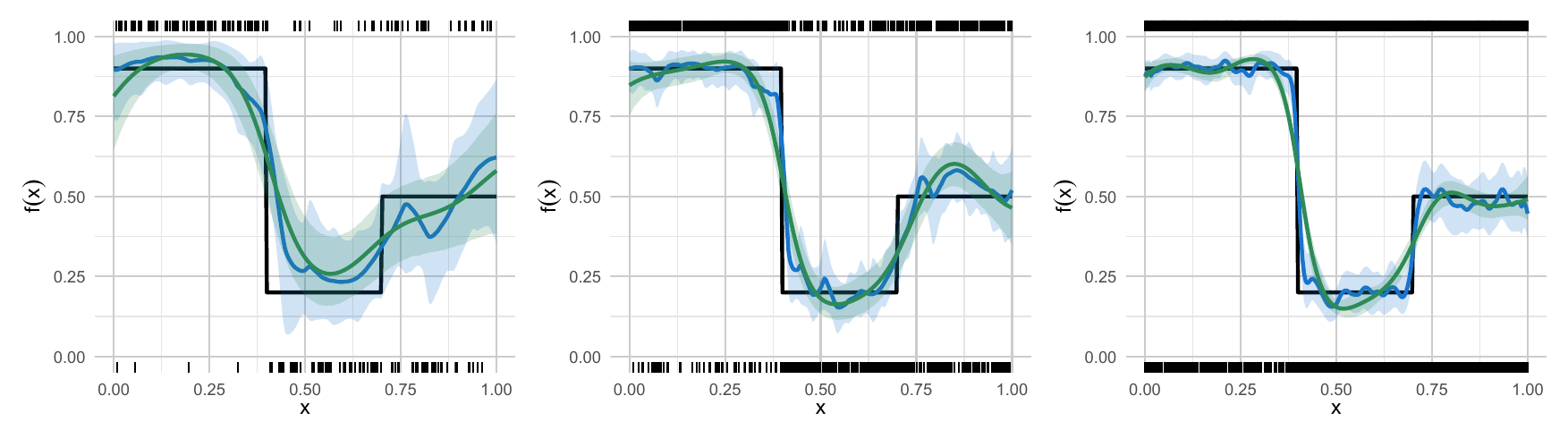}
\caption{Left to right: Posterior means for Gaussian (solid green) and Laplace (solid blue) priors, pointwise $95\%$-credible intervals (shaded regions) for $n = 200, 1000, 5000$, respectively. The ground truth $p_0$ from \eqref{Eq:1D_block} is shown in solid black.}
\label{Fig:1d_block}
\end{figure}%%=============================================================================%%

\begin{table}[!ht]%%=======================================================================%%
    \centering
    \begin{tabular}{rr|rrrrr}
    & $n$ & 50 & 200 & 1000 & 5000  \\  
    \hline
    \multirowcell{2}[0pt][l]{Gaussian} & $\| \bar p_n - p_0 \|_{L^1}$ 
    &0.26 (0.03) & 0.17 (0.03) & 0.09 (0.01) & 0.08 (0.006) \\
    & $\| \bar p_n - p_0  \|_{L^1}/
    \| p_0\|_{L^1}$ 
    & 0.36 (0.05) & 0.19 (0.04) & 0.14 (0.01) & 0.12 (0.009)  \\ 
    \hline
    \multirowcell{2}[0pt][l]{Laplace} & $\| \bar p_n - p_0 \|_{L^1}$ 
    & 0.25 (0.03) & 0.15 (0.03) & 0.09 (0.01) & 0.06 (0.003) \\
    & $\| \bar p_n - p_0  \|_{L^1}/
    \| p_0\|_{L^1}$ 
    & 0.35 (0.04) & 0.17 (0.04) & 0.14 (0.01) & 0.09 (0.005)  \\ 
    \hline
    \end{tabular}
    \caption{Average $L^1$-estimation errors for the posterior mean (and their standard deviations) over 50 repeated experiments with the ground truth $p_0$ from \eqref{Eq:1D_block}.}
    \label{Tab:1d_block}
\end{table}%%=============================================================================%%

%
%
%

%%========================================================================================%%
\subsection{Bivariate experiments}\label{Subsec:2DExp}

Over the unit square $[0,1]^2$, we also consider two scenarios, respectively with:
\begin{enumerate}
\item A spatially homogeneous true probability response function given by
\begin{align}
\label{Eq:2D_skn}
    f(x_1,x_2) = \frac{1}{4} \ f_{SN}\left(x_1,x_2; \ (0.4, 0.6), \ 0.05 I_2, \ (3,-2) \right),
    \qquad (x_1,x_2)\in[0,1]^2,
\end{align}
where $f_{SN}$ denotes the (bivariate) skew-normal p.d.f.~and $I_2$ is the identity matrix of $\R^{2,2}$. See the last panel of Figure \ref{Fig:2D_skn};
\item A spatially inhomogeneous ground truth with a square block component
\begin{equation}
\label{Eq:2D_bs}
    p_0(x_1,x_2) = 0.4 \prod_{h = 1}^2 (1 + \sgn(x_h - b_h)) (1 - \sgn(x_h - c_h)),
    \qquad (x_1,x_2)\in[0,1]^2,
\end{equation}
where the extremes are $b = (0.1, 0.1)$ and $c = (0.5, 0.5)$, cf.~Figure \ref{Fig:2D_bs} (last panel).
\end{enumerate}

For both, the obtained posterior mean estimates $\bar p_n$ based on the hierarchical rescaled Besov-Laplace prior and the hierarchical Gaussian prior are shown in Figures \ref{Fig:2D_skn} and \ref{Fig:2D_bs}, respectively, for increasing sample sizes $n = 200, 1000, 5000$. The associated $L^1$-estimation errors are reported in Tables \ref{Tab:2D_skn} and \ref{Tab:2D_bs}, respectively.

The numerical results broadly reinforce the findings from the univariate experiments from Section \ref{Subsec:1DExp}. In the homogeneous case, both procedures deliver excellent recoveries, achieving very similar estimation errors, that steadily decrease as $n$ grows. On the other hand, the hierarchical Gaussian prior appears to be unable to correctly reconstruct the edges of the block component of the true probability response function \eqref{Eq:2D_bs}, which are visibly oversmoothed. These are instead precisely reconstructed by the hierarchical rescaled Besov-Laplace prior. For the latter, the obtained estimation errors are lower across all values of $n$, and particularly for the largest sample size.

\begin{figure}[H]%%=======================================================================%%
	\centering
    \subfloat{\includegraphics[width=\linewidth]{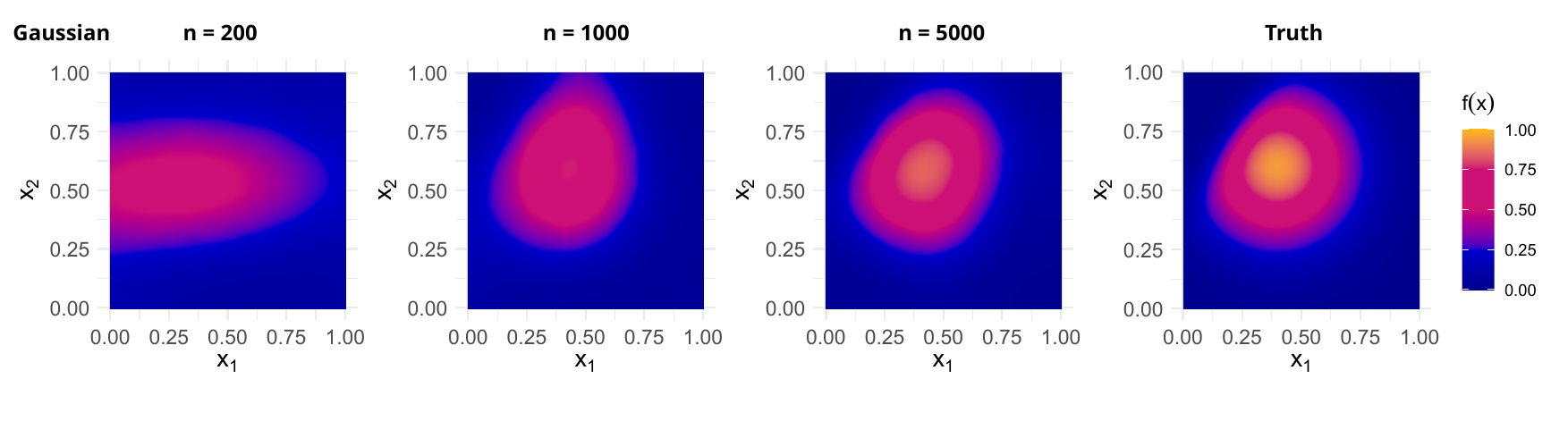}}\\
    \vspace{-2em}
    \subfloat{\includegraphics[width=\linewidth]{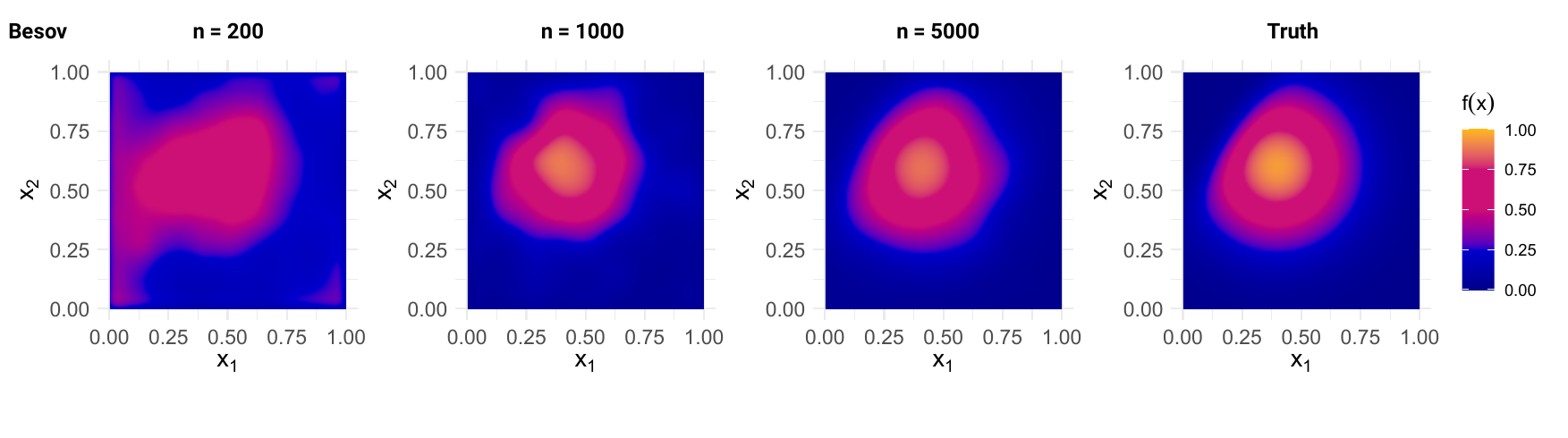}}
	\caption{Top to bottom, left to right: posterior means for Gaussian (top) and Besov-Laplace (bottom) priors for increasing sample sizes $n = 200,1000,5000$, in case of the spatially homogeneous ground truth \eqref{Eq:2D_skn}, shown in the rightmost panel of both rows.}
    \label{Fig:2D_skn}
\end{figure}%%============================================================================%%

\begin{table}[!ht]%%=======================================================================%%
    \centering
    \begin{tabular}{rr|rrrrr}
    & $n$ & 50 & 200 & 1000 & 5000  \\  
    \hline
    \multirowcell{2}[0pt][l]{Gaussian} & $\| \bar p_n - p_0 \|_{L^1}$ 
    & 0.26 (0.01) & 0.18 (0.03) & 0.07 (0.006) & 0.04 (0.003) \\
    & $\| \bar p_n - p_0  \|_{L^1}/
    \| p_0\|_{L^1}$ 
    & 0.75 (0.02) & 0.50 (0.09) & 0.19 (0.02) & 0.12 (0.01)   \\ 
    \hline
    \multirowcell{2}[0pt][l]{Laplace} & $\| \bar p_n - p_0 \|_{L^1}$ 
    & 0.28 (0.03) & 0.17 (0.01) & 0.07 (0.007) & 0.05 (0.003)  \\
    & $\| \bar p_n - p_0  \|_{L^1}/
    \| p_0\|_{L^1}$ 
    & 0.80 (0.06) & 0.48 (0.02) & 0.21 (0.02) & 0.13 (0.01)  \\ 
    \hline
    \end{tabular}
    \caption{Average $L^1$-estimation errors for the posterior mean (and their standard deviations) over 50 repeated experiments with the ground truth $p_0$ from \eqref{Eq:2D_skn}.}
    \label{Tab:2D_skn}
\end{table}%%=============================================================================%%

\begin{figure}[H]%%========================================================================%%
\centering
\subfloat{\includegraphics[width=\linewidth]{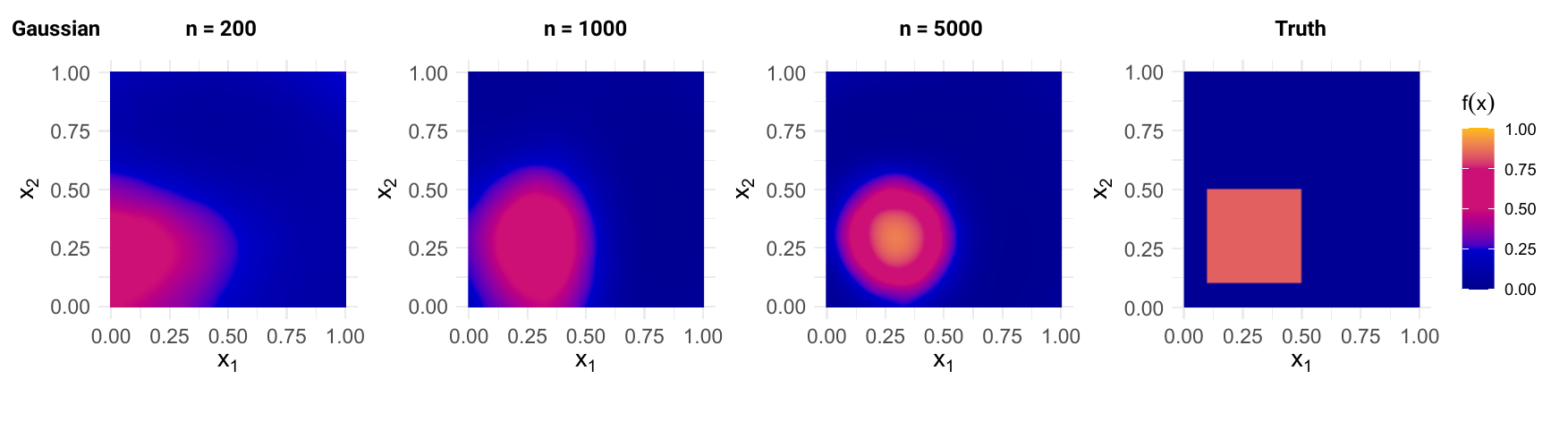}}\\
    \vspace{-2.5em}
    \subfloat{\includegraphics[width=\linewidth]{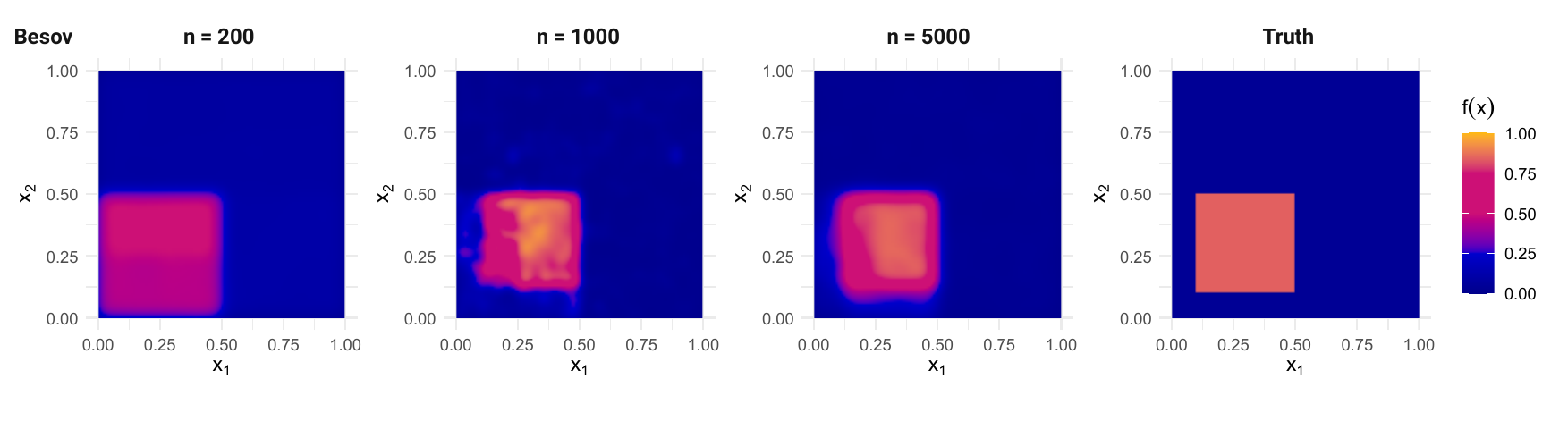}}
\caption{Top to bottom, left to right: posterior means for Gaussian (top) and Besov-Laplace (bottom) priors for increasing sample sizes $n = 200,1000,5000$, in case of the spatially inhomogeneous ground truth \eqref{Eq:2D_bs}, shown in the rightmost panel of both rows.}
\label{Fig:2D_bs}
\end{figure}%%============================================================================%%

\begin{table}[!ht]%%=======================================================================%%
    \centering
    \begin{tabular}{rr|rrrrr}
    & $n$ & 50 & 200 & 1000 & 5000  \\  
    \hline
    \multirowcell{2}[0pt][l]{Gaussian} & $\| \bar p_n - p_0 \|_{L^1}$ 
    & 0.28 (0.05) & 0.24 (0.02) & 0.17 (0.01) & 0.14 (0.003) \\
    & $\| \bar p_n - p_0  \|_{L^1}/
    \| p_0\|_{L^1}$ 
    & 0.84 (0.12) & 0.72 (0.05) & 0.51 (0.03) & 0.42 (0.009)  \\ 
    \hline
    \multirowcell{2}[0pt][l]{Laplace} & $\| \bar p_n - p_0 \|_{L^1}$ 
    & 0.28 (0.05) & 0.21 (0.01) & 0.14 (0.02) & 0.08 (0.003)  \\
    & $\| \bar p_n - p_0  \|_{L^1}/
    \| p_0\|_{L^1}$ 
    & 0.84 (0.11) & 0.62 (0.02) & 0.50 (0.05) & 0.25 (0.008) \\ 
    \hline
    \end{tabular}
    \caption{Average $L^1$-estimation errors for the posterior mean (and their standard deviations) over 50 repeated experiments with the ground truth $p_0$ from \eqref{Eq:2D_bs}.}
    \label{Tab:2D_bs}
\end{table}%%==============================================================================%%

%
%
%
%
%

%%=========================================================================================%%
\section{Discussion}\label{Sec:Discussion}

We have studied a nonparametric Bayesian approach to the recovery of spatially inhomogeneous binary classification surfaces based on hierarchical rescaled Besov-Laplace priors. Our main result (Theorem \ref{Theo:LaplRates}) shows that, for ground truths $p_0$ in the $B^\alpha_{1}$-scale, the posterior contracts around $p_0$ at the optimal rate in $L^1$-distance, automatically adapting to the smoothness. For implementation, we have outlined an MCMC posterior sampling algorithm in Section \ref{Subsec:Algorithm}, which we have applied in the simulation studies of Section \ref{Sec:Simulations}, demonstrating the practical feasibility and effective performance of the considered procedure. We conclude by reviewing several related research questions.

Firstly, as noted in Section \ref{Subsec:Prior}, the employed ($n$-dependent) prior is constructed from base rescaled Besov-Laplace random elements, cf.~\eqref{Eq:BaseLaplPrior}, and involves a carefully tuned hyper-prior for the regularity, cf.~\eqref{Eq:Hyperprior}. Recently, \cite{agapiou2024adaptive} achieved adaptive posterior contraction rates towards spatially inhomogeneous ground truths in the white noise model by assigning somewhat more natural independent hyper-priors both to the smoothness and the scaling terms. It remains unclear at the present stage whether their results could be extended to the binary classification setting.

The considered compact covariate space is a standard framework for theoretical analysis. For unbounded covariates, say on $\R^d$, the construction of the prior could in principle be extended using wavelet bases of $L^2(\R^d)$. Under additional assumptions on the tail of the true probability response function, much of the key probabilistic properties derived in \cite{ADH21} would then be retained, paving the way to obtaining adaptive posterior contraction rates over Besov spaces of functions `decaying at infinity' similarly to those considered in \cite{Nickl2007BracketingME}.
%, the desired entropy bounds can be extend, up to restricting the general Besov space to its weighted counterpart $B^{\alpha}_{p}(\mathbb{R}^d, \langle x \rangle^{\beta} ):= \{ f : \|f \cdot \langle x \rangle^{\beta} \|^p_{B^\alpha_p} < \infty \}.$  Here, the polynomial weighting function $\langle x \rangle^{\beta} = (1 + \|x\|^2)^{\beta/2}$, parameterized by $\beta \in \mathbb{R}$, where $x$ is an element of $\mathbb{R}^d$, controls the tail behavior, while $\alpha$ and $p$ follow the notation of Section \ref{Subsec:Notation}. 
Further, scenarios with both discrete and continuous predictors are often of interest. The present developments could be adapted to this case via product priors that separate the discrete and continuous components, modeling the probability response functions for the latter via Besov-Laplace priors. The hyper-prior on the regularity parameter $\alpha$ can then be refined to a hierarchical structure that allows for different smoothness levels across discrete categories, helping to achieve borrowing of strength across groups.

Lastly, let us also mention the important theoretical issue of providing a frequentist validation for the uncertainty quantification delivered by the considered Bayesian methodology, since it is known that, in infinite-dimensional statistical models, credible sets can have asymptotically zero coverage even if they arise from consistent posterior distributions; see~\cite{C93}. An established approach to derive such guarantees is via so-called `nonparametric Bernstein von-Mises theorems', \cite{CN13}. A result of this kind for Besov-Laplace prior has been recently established, in the context of drift estimation for diffusion processes, by \cite{giordano2025semiparametric}. Pursuing these results for the considered hierarchical priors for probability response functions is an interesting open question for future research.

\section*{Data availability statement}
The \texttt{R} code to replicate the simulation study and real data analyses is available at: \url{https://github.com/PatricDolmeta/Besov-Laplace-Classification}.

\section*{Acknowledgments}

The authors are grateful to the Associate Editor and two anonymous Referees for helpful comments that lead to an improvement of the manuscript.
The authors thank the ``de Castro'' Statistics Initiative and Collegio Carlo Alberto for supporting this research. M.G.~also acknowledges the partial financial support by MUR, PRIN project 2022CLTYP4. 

%\bmhead{Acknowledgements}

%The authors gratefully acknowledge the “de Castro” Statistics Initiative for supporting this research. M.G.~is also grateful for the partial financial support by MUR, PRIN project 2022CLTYP4.

%
%
%

%%=========================================================================================%%
%\bmhead{Code availability}

%The code to reproduce all the presented numerical experiments is available at \url{https://github.com/PatricDolmeta/Besov-Laplace-Classification}.

%
%
%
%
%
\bibliography{sn-bibliography.bib}% common bib file

%%=========================================================================================%%

\clearpage 
\begin{center}
	\LARGE \textbf{Supplementary Material}
\end{center}
In this supplement, we present the proofs of all our results.

\appendix

\counterwithin{equation}{section}
\counterwithin{figure}{section}
\counterwithin{table}{section}
%%%%%%%%%%%%%%%%%%%%%%%%%%%%%%%%%%%%%%%%%%%%%%%%%%%%%%%%%%%%%%%%%%%%%%%%%%%

%%=========================================================================================%%
\section{Proofs}
\label{Sec:Proof}

%
%
%

%%=========================================================================================%%
\subsection{Proof of Theorem \ref{Theo:LaplRates}}

We apply the general program for posterior contraction rates in total variation distance set forth in \cite{GGvdV00}; see also \cite[Theorem 8.9]{GvdV17}. Recall from Section \ref{Sec:Model} that for a data pair $(X,Y)\sim Q^{(1)}_p$, $p\in\Pcal$, arising as in model \eqref{Eq:Model}, the joint probability density function is given by
$$
    q_p(y,x) = p(x)^y(1-p(x))^{1-y}\mu_X(x),
    \qquad y\in\{0,1\}, \qquad x\in\cube.
$$
For two probability response functions $p_1,p_2\in\Pcal$, the total variation distance between the laws $Q^{(1)}_{p_1}$, $Q^{(1)}_{p_2}$ is then given by
\begin{align*}
    TV(Q^{(1)}_{p_1},Q^{(1)}_{p_2}) 
    &=\frac{1}{2}\|q_{p_1} - q_{p_2}\|_{L^1(\{0,1\}\times\cube)}\\
    &=\frac{1}{2}\Bigg[\int_{\cube} |p_1(x) - p_2(x)|\mu_X(x)dx \\
    &\quad\ +\int_{\cube} |1-p_1(x) -1 + p_2(x)|\mu_X(x)dx\Bigg]
    = \|p_1 - p_2\|_{L^1(\cube,\mu_X)}
\end{align*}
cf.~eq.~(B.1) in \cite{GvdV17}. Since $\mu_X$ is bounded and bounded away from zero by assumption, we conclude that the latter is equivalent to the standard $L^1$-distance $\|p_1 - p_2\|_1$. Further, let 
$$
    K(p_1,p_2):=E^{(1)}_{p_1}\left[\log \frac{p_1(Y,X)}{p_2(Y,X)}\right];
    \qquad
    V(p_1,p_2):=E^{(1)}_{p_1}\left| \log \frac{p_1(Y,X)}{p_2(Y,X)} -  K(p_1,p_2)\right|^2,
$$
be the Kullback-Leibler divergence and variation, respectively. Theorem 8.9 in \cite{GvdV17} then yields that, if for some positive sequence $\varepsilon_n\to0$ such that $n\varepsilon_n^2\to\infty$, the hierarchical rescaled Besov-Laplace prior $\Pi_n$ for classification surfaces from Section \ref{Subsec:Prior} is shown to satisfy
\begin{equation}
\label{Eq:SmallBall}
	\Pi_n\left(p  :  K(p_0,p)\le \varepsilon_n^2,
    V(p_0,p)\le \varepsilon_n^2\right)
	\ge  e^{-Cn\varepsilon_n^2},
\end{equation}
for some constant $C>0$, and all $n\in\N$ large enough, as well as
\begin{equation}
\label{Eq:Sieves}
	\Pi_n(\Pcal_n^c)\le e^{-(C+4)n\varepsilon_n^2},
\end{equation}
for measurable sets $\Pcal_n\subseteq\Pcal$ satisfying
\begin{equation}
\label{Eq:MetricEntropy}
	\log N(\varepsilon_n; \Pcal_n, \|\cdot\|_1)
	\lesssim n\varepsilon_n^2,
\end{equation}
then $\Pi_n(\cdot|D^{(n)})$ contracts towards $p_0$ at rate $\varepsilon_n$ in total variation (i.e., standard $L^1$-) distance.

We verify conditions \eqref{Eq:SmallBall} - \eqref{Eq:MetricEntropy} with $\varepsilon_n = Mn^{-\alpha_0/(2\alpha_0+d)}$ with $M>0$ a large enough constant. Let $\Pi_{W_n}$ be the law of the hierarchical rescaled Besov-Laplace random element $W_n$ from \eqref{Eq:BaseLaplPrior} with smoothness hyper-prior $\alpha\sim\Sigma_n$ as in \eqref{Eq:Hyperprior}. Note that, since the logistic link $H$ is strictly increasing and smooth, it possesses a strictly increasing and smooth inverse, $H^{-1}:[0,1]\to\R$. Thus, in view of the positivity assumption $\inf_{x\in\cube}p_0(x)>0$, we have $p_0 =H\circ w_0$ for $w_0:= H^{-1}\circ p_0$. Further, recalling that $p_0 \in B^{\alpha_0}_{1}$, we can conclude by Theorem 10 in \cite{BS10}, that $w_0\in B^{\alpha_0}_{1}$ as well. Then, by Lemma 2.8 in \cite{GvdV17}, for all measurable and bounded $w:\cube\to\R$,
$$
    \max\left\{K(p_0,H\circ w),
    V(p_0,H\circ w)\right\}\lesssim \|w_0 - w\|_{L^2(\cube,\mu_X)}^2
    \simeq \|w_0 - w\|_2^2.
$$
The prior probability in \eqref{Eq:SmallBall} is then bounded below by, for some $c_1>0$, by
$$
	\Pi_{W_n}\left(w:\|w - w_0\|_2 \le c_1Mn^{-\frac{{\alpha_0}}{2{\alpha_0}+d}}
	\right)
$$
which, upon choosing $M>0$ large enough, is greater than $e^{-c_2 n^{d/(2\alpha_0+d)}} = e^{-c_2 n\varepsilon_n^2}$ for some $c_2>0$ by Lemma \ref{Lem:SmallBall} below.

Turning to the construction of the sieves $\Pcal_n$ from \eqref{Eq:Sieves} and \eqref{Eq:MetricEntropy}, by Lemma \ref{Lem:Sieves} below, there exists sufficiently large constants $b_1,b_2>0$ such the sets
$$
	\Wcal_n :=\left\{w = w^{(1)} + w^{(2)} 
    : \|w^{(1)}\|_1\le b_1 n^{-\frac{\alpha_*}{2\alpha_*+d}},  \
	\|w^{(2)}\|_{B^{\alpha_*+d}_{1}}\le b_1 n^\frac{d}{2\alpha_*+d} \right\},
$$
with $\alpha_* = {\alpha_0}/(1 + b_2/\log n)$, satisfy
$$
    \Pi_{W_n}(\Wcal_n^c)\le e^{-(c_2+4) n^{d/(2\alpha_0+d)}} = e^{-(c_2+4)n\varepsilon_n^2}.
$$
Set $\Pcal_n:=\{H\circ w, \ w\in\Wcal_n\}$. Then, by construction, $\Pi_n(\Pcal_n^c)\le \Pi_{W_n}(\Wcal_n^c) \le e^{-(c_2+4)n\varepsilon_n^2}$, showing that \eqref{Eq:Sieves} is indeed verified. Lastly, by Lemma 3.2 in \cite{vdVvZ08}, for all $w_1,w_2\in\Wcal_n$,
$$
    \| H\circ w_1 - H\circ w_2\|_1
    \lesssim \|w_1 - w_2\|_1
$$
and therefore
$$
	\log N(\varepsilon_n; \Pcal_n, \|\cdot\|_1)
    \lesssim \log N(\varepsilon_n; \Wcal_n, \|\cdot\|_1).
$$
Since, $n^{-\alpha_*/(2\alpha_*+d)}\lesssim \varepsilon_n$, cf.~\eqref{Eq:Equiv} below, and since, by construction, $\Wcal_n$ is a $b_1n^{-\alpha_*/(2\alpha_*+d)}$-enlargement in $L^1$-distance of the set $\{w:\|w\|_{B^{\alpha_*+d}_{1}}\le b_1 n^{d/(2\alpha_*+d)}\}$, the metric entropy of interest is upper bounded by a multiple of
\begin{align*}
	\log N\Big(\varepsilon_n; \left\{ w: \|w\|_{B^{\alpha_*+d}_{1}}
	\le b_1 n^\frac{d}{2\alpha_*+d} \right\}, \|\cdot\|_1\Big)
	&\lesssim 
	\left( \frac{b_1 n^\frac{d}{2\alpha_*+d} }{\varepsilon_n}\right)^{\frac{d}{\alpha_*+d}}\\
	&\lesssim 
	n^\frac{d}{2\alpha_*+d}
	\lesssim n\varepsilon_n^2,
\end{align*}
having used the metric entropy inequality in Theorem 4.3.36 in \cite{GN16} and again \eqref{Eq:Equiv}. This concludes the verification of \eqref{Eq:MetricEntropy} and the proof of Theorem \ref{Theo:LaplRates} in view of the equivalence between the total variation distance and the standard $L^1$-distance.
\qed

%
%
%

%%==============================================================================================%%
\subsection{Auxiliary results}

The next two auxiliary lemmas provide key quantitative properties of the information geometry of the hierarchical rescaled Besov-Laplace priors. They adapt to the present setting previous findings from \cite{giordano23besov}, which in turn were based on the investigations of \cite{lember2007universal} and \cite{vWvZ16}, where similar hyper-priors for the smoothness were considered.

\begin{lemma}\label{Lem:SmallBall}%%============================================================%%
Let $\Pi_{W_n}$ be the law of the hierarchical rescaled Besov-Laplace random element $W_n$ from \eqref{Eq:BaseLaplPrior} with smoothness hyper-prior $\alpha\sim\Sigma_n$ as in \eqref{Eq:Hyperprior}. Let $w_0\in B^{{\alpha_0}}_{1}([0,1]^d)$, for some ${\alpha_0}>d$. Then, for sufficiently large $B_1, B_2>0$,
$$
	\Pi_{W_n}\left(w:\|w - w_0\|_{2} \le B_1n^{-\frac{{\alpha_0}}{2{\alpha_0}+d}}
	\right)\ge e^{-B_2n^{d/(2{\alpha_0} + d)}}.
$$
\end{lemma}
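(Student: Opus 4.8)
The plan is to condition on the smoothness $\alpha$, reducing the hierarchical small-ball probability to a fixed-$\alpha$ estimate for the base rescaled Besov-Laplace prior, in the spirit of the density-estimation argument of \cite{giordano23besov}. Writing $\Pi_{W_n}^\alpha$ for the conditional law of $W_n$ given $\alpha$ and abbreviating $A_n:=\{w:\|w-w_0\|_2\le B_1 n^{-\alpha_0/(2\alpha_0+d)}\}$, I would start from
\[
\Pi_{W_n}(A_n)=\int_d^{\log n}\Pi_{W_n}^\alpha(A_n)\,\sigma_n(\alpha)\,d\alpha\ \ge\ \Big(\inf_{\alpha\in I_n}\Pi_{W_n}^\alpha(A_n)\Big)\,\Sigma_n(I_n),
\]
with $I_n:=[\alpha_0-\kappa/\log n,\alpha_0]$ for a small constant $\kappa>0$ (so that $I_n\subset(d,\log n]$ for $n$ large, and $\alpha\le\alpha_0$ throughout). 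Restricting to smoothnesses just below $\alpha_0$ is the decisive design choice: it is exactly what aligns the fixed-$\alpha$ decentering cost with the exponent $n^{d/(2\alpha_0+d)}$ charged by the hyper-prior.

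For the hyper-prior mass, note that on $I_n$ one has $\tfrac{d}{2\alpha+d}-\tfrac{d}{2\alpha_0+d}\lesssim 1/\log n$, whence $n^{d/(2\alpha+d)}\le C\,n^{d/(2\alpha_0+d)}$ uniformly in $\alpha\in I_n$. Together with $|I_n|\simeq 1/\log n$ and $\zeta_n\simeq\log n$ this gives $\Sigma_n(I_n)\gtrsim(\log n)^{-2}e^{-C n^{d/(2\alpha_0+d)}}\ge e^{-(C+1)n^{d/(2\alpha_0+d)}}$ for $n$ large, the polylogarithmic factor being negligible against the exponential.

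The core is the fixed-$\alpha$ bound $\Pi_{W_n}^\alpha(A_n)\ge e^{-c\,n^{d/(2\alpha+d)}}$ for $\alpha\in I_n$. Writing $s_l:=n^{-d/(2\alpha+d)}l^{-(\alpha/d-1/2)}$ for the per-coefficient scale, I would truncate at a suitable level $L_n\simeq n^{d/(2\alpha_0+d)}$, checking that both the expected prior tail energy $\E\big[\sum_{l>L_n}s_l^2w_l^2\big]$ and the approximation error $\sum_{l>L_n}w_{0,l}^2$ (controlled through $w_0\in B^{\alpha_0}_1$) stay $\lesssim\varepsilon_n^2$. For the head I would invoke the log-concavity of the Laplace law, in the form $e^{-|t+a|/2}\ge e^{-|a|/2}e^{-|t|/2}$, to obtain the shift inequality
\[
\Pr\!\Big(\textstyle\sum_{l\le L_n}(s_l w_l-w_{0,l})^2\le \tfrac{\varepsilon_n^2}{2}\Big)\ \ge\ e^{-\frac12\sum_{l\le L_n}|w_{0,l}|/s_l}\,\Pr\!\Big(\textstyle\sum_{l\le L_n}s_l^2w_l^2\le \tfrac{\varepsilon_n^2}{2}\Big).
\]
Here the decentering cost is $\tfrac12\sum_{l\le L_n}|w_{0,l}|/s_l=\tfrac12 n^{d/(2\alpha+d)}\sum_{l\le L_n}l^{\alpha/d-1/2}|w_{0,l}|\le\tfrac12 n^{d/(2\alpha+d)}\|w_0\|_{B^{\alpha_0}_1}$, the last step crucially using $\alpha\le\alpha_0$, while the centered small-ball factor is bounded below by $e^{-c\,n^{d/(2\alpha+d)}}$ by the small-ball computation for rescaled Besov-Laplace priors in \cite{ADH21}.

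I expect the centered small-ball factor to be the main obstacle: it cannot be obtained from a naive coordinatewise product of interval probabilities, which would spuriously lose a factor $\log n$ in the exponent, but requires the sharper analysis of the Laplace small-ball function (as in \cite{ADH21,giordano23besov}); the delicate point is verifying that it yields precisely the exponent $n^{d/(2\alpha+d)}$, uniformly over $\alpha\in I_n$. Granting this, combining the above bounds gives $\Pi_{W_n}(A_n)\ge e^{-(C+1)n^{d/(2\alpha_0+d)}}\cdot e^{-cC\,n^{d/(2\alpha_0+d)}}=e^{-B_2 n^{d/(2\alpha_0+d)}}$ for $B_2>0$ large enough, which is the claim.
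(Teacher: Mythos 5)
Your overall architecture is the same as the paper's: integrate over $\alpha$ against the hyper-prior, localize to a window of length $\simeq 1/\log n$ adjacent to $\alpha_0$ (so that $n^{d/(2\alpha+d)}\simeq n^{d/(2\alpha_0+d)}$ and $\Sigma_n(I_n)\gtrsim(\log n)^{-2}e^{-Cn^{d/(2\alpha_0+d)}}$), and then combine a Laplace decentering (shift) inequality with a centered small-ball bound for the base rescaled prior. The one structural difference is that you place the window \emph{below} $\alpha_0$, exploiting $\|w_0\|_{B^{\alpha}_1}\le\|w_0\|_{B^{\alpha_0}_1}$ for $\alpha\le\alpha_0$, whereas the paper takes $\alpha\in[\alpha_0,\alpha_0+1/\log n]$ and instead controls $\|P_{L_n}w_0\|_{B^{\alpha}_1}\le L_n^{(\alpha-\alpha_0)/d}\|w_0\|_{B^{\alpha_0}_1}\lesssim1$ via the projection. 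Your variant is workable: the only price is that for $\alpha<\alpha_0$ both the rescaling factor $n^{d/(2\alpha+d)}$ and the centered small-ball exponent exceed their values at $\alpha_0$, but on a $1/\log n$-window this costs only a multiplicative constant in the exponent, exactly as in your computation for the hyper-prior mass.

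There is, however, one step that fails as written: the claim that with $L_n\simeq n^{d/(2\alpha_0+d)}$ the approximation error $\sum_{l>L_n}w_{0,l}^2$ stays $\lesssim\varepsilon_n^2$. Membership in $B^{\alpha_0}_1$ only gives $\sum_{l>L_n}l^{\alpha_0/d-1/2}|w_{0,l}|\le\|w_0\|_{B^{\alpha_0}_1}$, hence $\|w_0-P_{L_n}w_0\|_2\le L_n^{-(\alpha_0/d-1/2)}\|w_0\|_{B^{\alpha_0}_1}$, which at your truncation level is of order $n^{-(\alpha_0-d/2)/(2\alpha_0+d)}\gg n^{-\alpha_0/(2\alpha_0+d)}=\varepsilon_n$. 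This is precisely why the paper takes the larger level $L_n\simeq n^{\alpha_0/[(2\alpha_0+d)(\alpha_0/d-1/2)]}$, chosen so that $L_n^{-(\alpha_0/d-1/2)}=\varepsilon_n$. The fix does not disturb the rest of your argument: your decentering cost is bounded by the full norm $n^{d/(2\alpha+d)}\|w_0\|_{B^{\alpha_0}_1}$ independently of $L_n$, and the prior tail energy only decreases when $L_n$ is enlarged. With that correction (and the uniformity check on the centered small-ball exponent over $I_n$ that you already flag), your proof goes through.
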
%%==================================================================================%%

\begin{proof}%%==================================================================================%%
For $\alpha>d$, let $\varepsilon_{\alpha,n} := n^{-\alpha/(2\alpha+d)}$ and let $\Pi_{W_{\alpha,n}}$ be the law of
\begin{equation}
\label{Eq:Wsn}
	W_{\alpha,n} 
	:= 
	\frac{W_{\alpha}}{n\varepsilon_{\alpha,n}^2},
	\qquad W_{\alpha}:=
	\sum_{l=1}^\infty l^{-\left(\frac{\alpha}{d} - \frac{1}{2}\right)}
	W_{l}\psi_{l}.
\end{equation}
Then, by construction, 
\begin{align*}
    \Pi_{W_n}&\left(w:\|w - w_0\|_{2} \le B_1n^{-\frac{{\alpha_0}}{2{\alpha_0}+d}}\right)\\
    & = \int_d^{\log n} 
    \Pi_{W_{\alpha,n}}\left(w:\|w - w_0\|_{2} \le B_1 \varepsilon_{\alpha_0,n} \right) \sigma_n(\alpha) d\alpha \\
    & \geq \int_{\alpha_0}^{\alpha_0 + 1/\log n} 
    \Pi_{W_{\alpha,n}}\left(w:\|w - w_0\|_{2} \le B_1 \varepsilon_{\alpha_0,n} \right) \sigma_n(\alpha) d\alpha.
\end{align*}
For a truncation level $L_n\in\N$ to be chosen below, the wavelet projection of $P_{L_n}w_0$ of $w_0\in B^{{\alpha_0}}_{1}([0,1]^d)$ satisfies
\begin{align*}
	\|w_0 - P_{L_n} w_0\|_2
	&=
	\sum_{l>L_n} l^{-\left(\frac{\alpha_0}{d} - \frac{1}{2}\right)} l^{\left(\frac{\alpha_0}{d} - \frac{1}{2}\right)}
	| w_{0,l} |
	\le
	L_n^{-\left(\frac{\alpha_0}{d} - \frac{1}{2}\right)}\|w_0\|_{B^{\alpha_0}_{1}}.
\end{align*}
Thus, taking $L_n \simeq n^{\alpha_0/[(2\alpha_0+d)(\alpha_0/d - 1/2)]}$ (note that this is greater than the usual order $n^{1/(2\alpha_0+d)}$), we have
$$
	\|w_0 - P_{L_n}w_0\|_2
	 \lesssim n^{-\frac{\alpha_0}{2\alpha_0+d}}
	 = \varepsilon_{\alpha_0,n}.
$$
Furthermore, for all $\alpha\in[\alpha_0,\alpha_0+1/\log n]$, it also holds 
\begin{align*}
	\|P_{L_n}w_0\|_{B^{\alpha}_{1}}
	&=\sum_{l\le L_n}
    l^{\left(\frac{\alpha}{d} - \frac{\alpha_0}{d}\right)} 
    l^{\left(\frac{\alpha_0}{d} - \frac{1}{2}\right)} 
	| \langle w_0,\psi_{l}
	\rangle_2 | \\
	&\le
		L_n^{\frac{\left(\alpha - \alpha_0 \right)}{d} } \|w_0\|_{B^{\alpha_0}_{1}}
	\lesssim
		n^{\frac{\alpha_0 \left(\alpha_0 + \log^{-1}n - \alpha_0 \right)} { (2\alpha_0+d)(\alpha_0 - d/2) }} = e^{\frac{\alpha_0}{(2\alpha_0+d)(\alpha_0-d/2)\log n}\log n} 	\lesssim 1.
\end{align*}
Hence, by the triangle inequality, for $B_1>0$ large enough and some $c_1>0$,
\begin{align*}
	\Pi_{W_{\alpha,n}}(w:\|w - w_0\|_2 \le B_1\varepsilon_{\alpha_0,n})
	\ge \Pi_{W_{\alpha,n}}(w:\|w - P_{L_n}w_0\|_2 \le c_1\varepsilon_{\alpha_0,n}).
\end{align*}
On the unit cube, we can further lower bound this quantity by,
$$ 
    \Pi_{W_{\alpha,n}}(w:\|w - P_{L_n}w_0\|_\infty \le c_1\varepsilon_{\alpha_0,n}), 
$$
which, by the decentering-inequality (32) in \cite{giordano23besov} for the rescaled Besov-Laplace random element $W_{\alpha,n}$, is greater than
\begin{align*}
	e^{-\|P_{L_n}w_0\|_{B^{\alpha}_{1}} n\varepsilon_{\alpha,n}^2 }
	 &\Pi_{W_{\alpha,n}}\left(w:\|w \|_\infty \le c_1\varepsilon_{\alpha_0,n}\right)\\
	 &\ge
	 e^{-c_2n\varepsilon_{\alpha_0,n}^2 }
	 \Pi_{W_\alpha}\left(w:\|w\|_\infty \le c_1 n \varepsilon_{\alpha_0,n}\varepsilon^2_{\alpha,n}\right).
\end{align*}
By the centered small ball inequality (34) in \cite{giordano23besov} (noting that $W_\alpha$ coincides with $W$ there with the choice $t=\alpha-d>0$),
\begin{align*}
	  \Pi_{W_\alpha}\left(w:\|w\|_\infty \le c_1 n \varepsilon_{\alpha_0,n}\varepsilon^2_{\alpha,n}\right)
	 \ge
		e^{-(c_3 (\alpha - d) + c_4) \left(c_1 n \varepsilon_{\alpha_0,n}\varepsilon^2_{\alpha,n}\right)^{-d/(\alpha-d)}} 
	\ge
		e^{-c_5n\varepsilon_{\alpha_0,n}^2},
\end{align*}
for $c_3,c_4,c_5>0$. Here, we used the fact that
\begin{align*}
	\left(n\varepsilon_{\alpha_0,n}\varepsilon^2_{\alpha,n}\right)^{-\frac{d}{\alpha-d}}
	&= \left(n \ n^{-\frac{\alpha_0}{2\alpha_0+d}} n^{\frac{2 \alpha}{2\alpha+d}} \right)^{-\frac{d}{\alpha-d}}\\
    &=\left(n^\frac{4\alpha\alpha_0 + 2d\alpha_0 + 2d\alpha + d^2 - 2\alpha\alpha_0 - d\alpha_0 - 4\alpha\alpha_0 - 2d\alpha}
	{(2\alpha+d)(2\alpha_0+d)}\right)^{-\frac{d}{\alpha-d}}\\
	&=\left(n^\frac{ \alpha_0d  + d^2 - 2\alpha\alpha_0  }{(2\alpha+d)(2\alpha_0+d)}
	\right)^{-\frac{d}{\alpha-d}}
	=
	\left(n^\frac{d }{2\alpha_0+d}
	\right)^{\frac{2\alpha\alpha_0 - d\alpha_0 - d^2}{(\alpha-d)(2\alpha+d)}}
	\le n\varepsilon_{\alpha_0,n}^2
\end{align*}
which holds provided the exponent is smaller than $1$, since $ n\varepsilon_{\alpha_0,n}^2=n^{d/2\alpha_0+d}$. To see this, note that
\begin{align*}
	(\alpha-d)(2\alpha+d) - (2\alpha\alpha_0 - d\alpha_0 - d^2) & \ge 0 %\\
     %2\alpha^2-\alpha d - 2\alpha\alpha_0 + d\alpha_0 = (\alpha-\alpha_0)(2\alpha - d) & \ge0,
\end{align*}
following from $\alpha \ge \alpha_0 \ge d$. Combining the previous bounds, we find that for all $\alpha\in[\alpha_0,\alpha_0+1/\log n]$, for sufficiently large $B_1>0$,
$$
	\Pi_{W_{\alpha,n}}(w:\|w - w_0\|_\infty \le B_1\varepsilon_{\alpha_0,n})
	\ge e^{-c_6n\varepsilon_{\alpha_0,n}^2}.
$$
As a result, 
\begin{align*}
	\int_{\alpha_0}^{\alpha_0 + 1/\log n} &
    \Pi_{W_{\alpha,n}}\left(w:\|w - w_0\|_{2} \le B_1 \varepsilon_{\alpha_0,n} \right) \sigma_n(\alpha) d\alpha  
    \\ &\ge
    \int_{\alpha_0}^{\alpha_0+\frac{1}{\log n}}
	e^{-c_6n\varepsilon_{\alpha_0,n}^2}\sigma_n(\alpha)d\alpha\\
	&=
	e^{-c_6n\varepsilon_{\alpha_0,n}^2}
	\int_{\alpha_0}^{\alpha_0+\frac{1}{\log n}}
	\frac{e^{-n\varepsilon^2_{\alpha,n}}}{\zeta_n}d\alpha \\
    & \ge
        \frac{1}{\log n} \times \frac{e^{-n\varepsilon^2_{\alpha_0,n}}}{\zeta_n} 
        \gtrsim 
		(\log n)^{-2}e^{-n\varepsilon^2_{\alpha_0,n}}
	\ge
		e^{-c_7n\varepsilon^2_{\alpha_0,n}},
\end{align*}
for some $c_7>0$, where we used that $\sigma_n(\alpha)$ is increasing in $\alpha$, that the length of the integration interval is $1/\log n$ and that the normalization constant of $\sigma_n$ satisfies $\zeta_n\simeq \log n$. The claim then follows taking $B_2 = c_6 + c_7>0$.
\end{proof}%%==================================================================================%%

\begin{lemma}\label{Lem:Sieves}%%==============================================================%%
Let $\Pi_{W_n}$ be the law of the hierarchical rescaled Besov-Laplace random element $W_n$ from \eqref{Eq:BaseLaplPrior} with smoothness hyper-prior $\alpha\sim\Sigma_n$ as in \eqref{Eq:Hyperprior}. For fixed $\alpha_0>d$, and $A_1,A_2>0$, let $\alpha_* = \alpha_0/(1 + A_2/\log n)$ and define the set
\begin{equation}
\label{Eq:SievesHier}
	\Wcal_n = \left\{w = w^{(1)} +  w^{(2)} , \ \|w^{(1)}\|_ 1
	 \le A_1n^{-\frac{\alpha_*}{2\alpha_*+d}},\ \|  w^{(2)}\|_{B^{\alpha_*+d}_{1}}
	\le A_2 n^\frac{d}{2\alpha_*+d}\right\}.
\end{equation}
Then, for all $K>0$, there exist sufficiently large $A_1,A_2$, such that for $n\in\N$ large enough,
$$
	\Pi_{W_n}(\Wcal_n^c)\le e^{-Kn^{d/(2\alpha_0+d)}}.
$$
\end{lemma}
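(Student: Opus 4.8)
The plan is to use the mixture structure of the hierarchical prior, writing $\Pi_{W_n}(\Wcal_n^c)=\int_d^{\log n}\Pi_{W_{\alpha,n}}(\Wcal_n^c)\,\sigma_n(\alpha)\,d\alpha$, where $\Pi_{W_{\alpha,n}}$ is the law of the fixed-regularity rescaled element $W_{\alpha,n}$ (cf.~\eqref{Eq:Wsn}), and to split the integral at the cut-off $\alpha_*$. On the low-regularity range $(d,\alpha_*]$ I would bound $\Pi_{W_{\alpha,n}}(\Wcal_n^c)\le1$ and merely control the hyper-prior mass: since $\alpha\mapsto n^{d/(2\alpha+d)}$ is decreasing, $\sigma_n$ is increasing, so $\int_d^{\alpha_*}\sigma_n(\alpha)\,d\alpha\le(\alpha_*-d)\,\sigma_n(\alpha_*)\lesssim e^{-n^{d/(2\alpha_*+d)}}$ using $\zeta_n\simeq\log n$. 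An elementary computation with $\alpha_*=\alpha_0/(1+A_2/\log n)$ gives $n^{d/(2\alpha_*+d)}\gtrsim e^{dA_2/(2\alpha_0+d)}\,n^{d/(2\alpha_0+d)}$, so enlarging $A_2$ forces this contribution below $e^{-Kn^{d/(2\alpha_0+d)}}$.

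The crux is the high-regularity range $[\alpha_*,\log n]$, where I must prove the \emph{uniform} bound $\Pi_{W_{\alpha,n}}(\Wcal_n^c)\le e^{-K'n^{d/(2\alpha_*+d)}}$ with the single $\alpha_*$-sieve. I would produce an explicit membership certificate through the wavelet projection at level $L_n\simeq n^{d/(2\alpha_*+d)}$: taking $w^{(2)}=P_{L_n}W_{\alpha,n}$ and $w^{(1)}=(I-P_{L_n})W_{\alpha,n}$, the event $w\notin\Wcal_n$ forces $\|w^{(1)}\|_1>A_1n^{-\alpha_*/(2\alpha_*+d)}$ or $\|w^{(2)}\|_{B^{\alpha_*+d}_1}>A_2n^{d/(2\alpha_*+d)}$. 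Both norms are dominated by weighted sums $\sum_l c_l|w_l|$ of the i.i.d.~Laplace coefficients (via $\|\cdot\|_1\lesssim\|\cdot\|_{B^0_1}$ for the tail part), whose tails I would estimate by the Chernoff inequality $\Pr(\sum_l c_l|w_l|>u)\le e^{-tu}\prod_l\E[e^{tc_l|w_l|}]$, valid whenever $t\max_l c_l$ stays below the Laplace threshold $1/2$, where $\log\E[e^{s|w_l|}]\lesssim s$. The point of the choice of $L_n$ is that, for the binding case $\alpha=\alpha_*$, it renders $\sum_l c_l$ of the exact sieve order while controlling $\max_l c_l$: for $w^{(2)}$ one finds $\max_l c_l\lesssim1$ and $\sum_l c_l\lesssim n^{d/(2\alpha_*+d)}$, and for $w^{(1)}$ one finds $\sum_l c_l\lesssim n^{-\alpha_*/(2\alpha_*+d)}$ with $\max_l c_l\lesssim n^{-(\alpha_*+d)/(2\alpha_*+d)}$. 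Optimizing $t$ (order one for $w^{(2)}$; the large order $1/\max_l c_l$ for the small-deviation event of $w^{(1)}$) then yields tails $e^{-c(A_i-C)n^{d/(2\alpha_*+d)}}$, which fall below $e^{-K'n^{d/(2\alpha_*+d)}}$ once $A_1,A_2$ are large.

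Assembling the two ranges, on $[\alpha_*,\log n]$ I integrate the uniform bound against $\sigma_n$ with $\int\sigma_n\le1$, and since $\alpha_*<\alpha_0$ implies $n^{d/(2\alpha_*+d)}\ge n^{d/(2\alpha_0+d)}$, this matches the target; combined with the low-regularity piece and $A_1,A_2$ taken large, the claim follows. I expect the main difficulty to lie precisely in the uniformity of this high-regularity concentration across the whole window $[\alpha_*,\log n]$: because the rescaling $n^{-d/(2\alpha+d)}$ grows with $\alpha$ while the coefficient decay $l^{-(\alpha/d-1/2)}$ sharpens, one cannot simply invoke the fixed-$\alpha$ sieve estimate of \cite{giordano23besov}, whose natural exponent $n^{d/(2\alpha+d)}$ degenerates to a constant as $\alpha\uparrow\log n$. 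The remedy is to anchor every estimate to the favorable $\alpha_*$-scale, which reduces to verifying that the above orders of $\sum_l c_l$ and $\max_l c_l$ established at $\alpha=\alpha_*$ persist for all $\alpha\ge\alpha_*$; this in turn follows from the elementary inequality $(2\alpha-d)(\alpha-\alpha_*)\ge0$, valid since $\alpha\ge\alpha_*>d$.
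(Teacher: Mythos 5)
Your proposal is correct, and while it follows the paper's strategy for the mixture decomposition and the low-regularity range $(d,\alpha_*]$ (bounding the hyper-prior mass via monotonicity of $\sigma_n$ and the gain $n\varepsilon_{\alpha_*,n}^2\ge c(A_2)\,n\varepsilon_{\alpha_0,n}^2$ with $c(A_2)\to\infty$), it takes a genuinely different route on the decisive high-regularity range $[\alpha_*,\log n]$. The paper imports the two-level concentration inequality and the centered small-ball bound from \cite{giordano23besov} as black boxes, which forces it to pass through an intermediate sieve $\overline\Wcal_n$ with an extra $H^{\alpha-d/2}$-component and then establish the chain of inclusions culminating in \eqref{Eq:DesiredIncl}, including the case split at $\alpha_*+d$ and the parabola argument $\Delta(\alpha)\le\Delta(\alpha_*)=0$. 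You instead exploit the explicit product-Laplace structure: projecting at $L_n\simeq n^{d/(2\alpha_*+d)}$ and writing both sieve norms as weighted $\ell^1$-sums $\sum_l c_l|w_l|$ of i.i.d.\ Laplace coefficients (using $\|\cdot\|_1\lesssim\|\cdot\|_{B^0_1}$ for the tail, as in \cite{ST87}), you get the required exponential tails from a direct Chernoff bound, with the uniformity in $\alpha$ reduced to checking that $\max_l c_l$ and $\sum_l c_l$ keep their $\alpha_*$-scale orders; I verified that both reductions are equivalent to $(2\alpha-d)(\alpha-\alpha_*)\ge0$, exactly as you claim, and that the implied constants do not depend on $A_2$ (so there is no circularity with $\alpha_*$ depending on $A_2$). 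Your approach is more elementary and self-contained, directly producing a decomposition matching \eqref{Eq:SievesHier} and avoiding the inclusion bookkeeping; the paper's approach buys uniformity with the general log-concave machinery and stays consistent with the proofs in the prior literature it builds on. Two small points to tidy up in a full write-up: the bound $\log\E[e^{s|w_l|}]\lesssim s$ requires $s$ bounded away from the Laplace threshold (e.g.\ $s\le 1/4$), not merely below $1/2$; and the constant in your low-regularity comparison should be $e^{2dA_2\alpha_0/(2\alpha_0+d)^2}$ rather than $e^{dA_2/(2\alpha_0+d)}$, though only its divergence in $A_2$ matters.
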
%%==================================================================================%%

\begin{proof}%%==================================================================================%%

For all $A_1,A_2>0$, the probability of interest is equal to
\begin{equation}
    \int_d^{\alpha_*}\Pi_{W_{\alpha,n}}(\Wcal_n^c)\sigma_n(\alpha)d\alpha + 
	\int_{\alpha_*}^{\log n}\Pi_{W_{\alpha,n}}(\Wcal_n^c)\sigma_n(\alpha)d\alpha.
   \label{Eq:IntBound1}
\end{equation}
We start analyzing the first integral, which is smaller than 
$$
	\int_d^{\alpha_*}\sigma_n(\alpha)d\alpha\le \alpha_* \frac{e^{-n\varepsilon_{\alpha_*,n}^2}}{\zeta_n}
	\le e^{-c_1n\varepsilon_{\alpha_*,n}^2},
$$
having upper bounded the size of the integration interval by $\alpha_*$ and exploited the fact that the hyper-prior p.d.f.~$\sigma_n$ is increasing in $\alpha$ and has normalizing constant $\zeta_n\simeq \log n$. By comparing $n\varepsilon_{\alpha_*,n}^2$ and $n\varepsilon_{\alpha_0,n}^2$, we obtain, 
\begin{align*}
	\frac{n\varepsilon_{\alpha_*,n}^2}{n\varepsilon_{\alpha_0,n}^2}
	%&= n^{\frac{d}{\frac{2 \alpha_0 \log n}{\log n + A_2} + d}-\frac{d}{2\alpha_0+d}} 
    &= n^{\frac{dA_2+d\log n}{dA_2 + (2\alpha_0+d)\log n} - \frac{d}{2\alpha_0 + d}} \\
    & =n^\frac{2dA_2\alpha_0}{(2\alpha_0+d)^2\log n+dA_2(2\alpha_0 + d)}
	= e^{\frac{2dA_2\alpha_0\log n}{(2\alpha_0+d)^2\log n+dA_2(2\alpha_0 + d)}},
\end{align*}
which shows that, for $c_2:=e^{\frac{2dA_2\alpha_0}{(2\alpha_0+d)^2}}>1$,
\begin{equation}
\label{Eq:Equiv}
\begin{split}
	   n\varepsilon_{\alpha_*,n}^2 
       & = n\varepsilon_{\alpha_0,n}^2  e^{\frac{2dA_2\alpha_0\log n}{(2\alpha_0+d)^2\log n+dA_2(2\alpha_0 + d)}} \nonumber \\
      & = n\varepsilon_{\alpha_0,n}^2  e^{\frac{dA_2\alpha_0}{(2\alpha_0+d)^2}} e^{\frac{dA_2\alpha_0(\log n (2\alpha_0+d)-dA_2)}{(2\alpha_0+d)^2  (\log n (2\alpha_0+d)+dA_2)}}  \ge \sqrt{c_2} n\varepsilon_{\alpha_0,n}^2,   
\end{split}
\end{equation}
as well as
\begin{align*}
	n\varepsilon_{\alpha_0,n}^2  e^{\frac{2dA_2\alpha_0}{(2\alpha_0+d)^2}} e^{-\frac{2d^2A_2^2\alpha_0}{(2\alpha_0+d)^2  (\log n (2\alpha_0+d)+dA_2)}}  \le c_2 n\varepsilon_{\alpha_0,n}^2
\end{align*}
holding for all $n\in \N$ large enough. For any $K>0$, we can then take $A_2$ large enough so that 
$$
	\int_d^{\alpha_*}\Pi_{W_{\alpha,n}}(\Wcal_n^c)\sigma_n(\alpha)d\alpha
	\le e^{-c_1n\varepsilon_{\alpha_*,n}^2}
	\le e^{-c_1\sqrt{c_2}n\varepsilon_{\alpha_0,n}^2}
	\le e^{-(K+1)n\varepsilon_{\alpha_0,n}^2}.
$$

Concerning the second integral in \eqref{Eq:IntBound1}, we write
\begin{align*}
	&\Pi_{W_{\alpha,n}}(\Wcal_n)\nonumber\\
	&=
	 \Pi_{W_{\alpha}}\Big(w = w^{(1)} +  w^{(2)}:  \|w^{(1)}\|_ 1
	 \le A_1n\varepsilon_{\alpha_*,n}\varepsilon_{\alpha,n}^2,\ \|  w^{(2)}\|_{B^{\alpha_*+d}_{1}}
	\le A_1 n^2\varepsilon^2_{\alpha_*,n}\varepsilon_{\alpha,n}^2\Big).
\end{align*}
Letting
\begin{align*}
	\overline\Wcal_n & := \Big\{\overline w= \overline w^{(1)} 
	+ \overline w^{(2)} + \overline w^{(3)} :  
	 \|\overline w^{(1)}\|_ 1\le n\varepsilon_{\alpha_*,n}\varepsilon_{\alpha,n}^2
	, \ \|\overline w^{(2)}\|_{H^{\alpha-d/2}}\le \sqrt{\overline A_1 n\varepsilon_{\alpha_*,n}^2}, \\
	&\quad\ \|\overline w^{(3)}\|_{B^{\alpha}_{1}}\le \overline A_1 n\varepsilon_{\alpha_*,n}^2 \Big\},
\end{align*}
the two-level concentration inequality (33) in \cite{giordano23besov} implies, using again \eqref{Eq:Equiv}, for $c_3,c_4>0$,
\begin{align*}
	\Pi_{W_{\alpha}}(\overline\Wcal_n)
	&\ge
		1- \frac{1}{\Pi_{W_{\alpha}}\left(w: \|w\|_ 1\le n\varepsilon_{\alpha_*,n}
		\varepsilon_{\alpha,n}^2\right)}
		e^{-c_3\overline A_1 n\varepsilon_{\alpha_*,n}^2}\\
	&\ge
		1- \frac{1}{\Pi_{W_{\alpha}}\left(w: \|w\|_ 1\le n\varepsilon_{\alpha_*,n}
		\varepsilon_{\alpha,n}^2\right)}
		e^{-c_4\overline A_1 n\varepsilon_{\alpha_0,n}^2}.
\end{align*}
As $\|w\|_ 1\le \|w\|_ \infty$, noting that $\alpha\ge \alpha_* = \alpha_0\log n/(M+\log n)>d$ for all $n$ large enough since $\alpha_0>d$, by the centred small ball inequality (34) in \cite{giordano23besov}, we have for all $\alpha_*<\alpha\le \log n$,
\begin{align*}
%\label{Eq:HierSmallBall}
	 \Pi_{W_{\alpha}}\left(w: \|w\|_ 1\le n\varepsilon_{\alpha_*,n}\varepsilon_{\alpha,n}^2\right)
	&\ge
		e^{-(c_5 (\alpha - d) + c_6)\left(n\varepsilon_{\alpha_*,n}\varepsilon_{\alpha,n}^2\right)^{-d/(\alpha-d)}} \\
	&\ge
		e^{-c_7\log n\left(n\varepsilon_{\alpha_*,n}\varepsilon_{\alpha,n}^2\right)^{-d/(\alpha-d)}}.
\end{align*}
Using again \eqref{Eq:Equiv} and the fact that
\begin{align*}
	\log n\left(n\varepsilon_{\alpha_*,n}\varepsilon_{\alpha,n}^2\right)^{-\frac{d}{\alpha-d}}
%	&=\left(n^{-\frac{\alpha_*}{2\alpha_*+d}}n^\frac{d}{2 \alpha+d}\right)^{-\frac{d}{\alpha-d}}
	&=\log n
	\left(n^\frac{-2\alpha \alpha_* - d\alpha_* + 2d\alpha_* + d^2}{(2\alpha_*+d)(2 \alpha+d)}\right)^{-\frac{d}{\alpha-d}}\\
%	&=\left(n^\frac{d}{2\alpha_*+d}\right)^{\frac{2\alpha \alpha_* - d\alpha_* - d^2}{(2 \alpha+d)(\alpha-d)}}\\
	&=\log n\left(n\varepsilon_{\alpha_*,n}^2\right)^{\frac{2\alpha \alpha_* - d\alpha_* - d^2}{(2 \alpha+d)(\alpha-d)}}
	\le n\varepsilon_{\alpha_*,n}^2,
\end{align*}
as the last exponent is strictly smaller than one, by virtue of $\alpha \ge \alpha_*$, 
%\begin{align*}
%	(2 \alpha+d)(\alpha-d) - (2\alpha \alpha_* - d\alpha_* - d^2)
%	&=
%	2 \alpha^2-d\alpha-d^2 - 2\alpha \alpha_* + d\alpha_* + d^2\\
%	&=
%	2 \alpha(\alpha-\alpha_*) - d (\alpha - \alpha_*)
%	=
%	(\alpha-\alpha_*)(2 \alpha - d)
%	\ge0.
%\end{align*}
we obtain
$$
	 \Pi_{W_{\alpha}}\left(w: \|w\|_ 1\le n\varepsilon_{\alpha_*,n}\varepsilon_{\alpha,n}^2\right)
	 \ge e^{-c_7 n\varepsilon_{\alpha_*,n}^2 }
	 \ge e^{-c_8 n\varepsilon_{\alpha_0,n}^2 }.
$$
For sufficiently large $\overline A_1>0$, it then follows that for all $\alpha\in [\alpha_*,\log n]$
\begin{align}
\label{Eq:ExpIneq1}
	\Pi_{W_{\alpha}}(\overline\Wcal_n)
	&\ge
		1-
		e^{-(c_4\overline A_1 - c_8) n\varepsilon_{\alpha_0,n}^2}
	\ge
		1-
		e^{-(K+1) n\varepsilon_{\alpha_0,n}^2}.
\end{align}

Now let $P_{L_n}\overline w^{(2)}$ be the wavelet projection of the term $\overline w^{(2)}$ in the definition of $\overline\Wcal_n$, at resolution $L_n\in \N$ such that $L_n \simeq n^\frac{d}{2 \alpha + d}$. Then,
\begin{align*}
	\|\overline w^{(2)} - P_{L_n}\overline w^{(2)}\|_ 1
	&\le L_n^{-\left(\frac{\alpha}{d} - \frac{1}{2}\right)} \|\overline w^{(2)}\|_{H^{\alpha-d/2}}\\
	&\lesssim n^{-\frac{\alpha -d/2}{2 \alpha+d}}\sqrt n \varepsilon_{\alpha_*,n}
	=n^\frac{d}{2 \alpha+d}\varepsilon_{\alpha_*,n}
	=n\varepsilon_{\alpha,n}^2\varepsilon_{\alpha_*,n}.
\end{align*}
Also, %following Lemma 9 in \cite{giordano23besov}, 
\begin{align*}
	\| P_{L_n}\overline w^{(2)}\|_{B^{\alpha}_{1}}
	&\lesssim \sqrt{L_n} \|\overline w^{(2)}\|_{H^{\alpha-d/2}}\\
	&\lesssim n^{\frac{d/2}{2 \alpha+d}}n^\frac{d/2}{2\alpha_*+d}
	=n^\frac{d\alpha_* + d^2/2 + d\alpha + d^2/2}{(2 \alpha+d)(2\alpha_*+d)}
	\le n^{\frac{d(d + \alpha_* + \alpha)}{(2 \alpha+d)(2\alpha_*+d)}} \le n\varepsilon_{\alpha_*,n}^2
\end{align*}
since $d + \alpha + \alpha_* < 2\alpha + d$ whenever $\alpha\ge \alpha_*$. Thus, setting $\widetilde w^{(1)} := \overline w^{(1)} + (\overline w^{(2)} - P_{L_n}\overline w^{(2)})$ and $\widetilde w^{(2)} := \overline w^{(3)} +  P_{L_n}\overline w^{(2)}$ yields that for all $\alpha\in [\alpha_*,\log n]$ and all $n$ and $\widetilde A_1$ large enough,
\begin{align*}
	\overline\Wcal_n 
	& 
	\subseteq 
	\widetilde\Wcal_n := 
	\{ \widetilde w = \widetilde w^{(1)} + \widetilde w^{(2)} : 
	 \|\widetilde w^{(1)}\|_ 1\le \widetilde A_1 n\varepsilon_{\alpha,n}^2\varepsilon_{\alpha_*,n}
	,\  \|\widetilde w^{(2)}\|_{B^{\alpha}_{1}}\le \widetilde A_1 n\varepsilon_{\alpha_*,n}^2 \}.
\end{align*}
In view of \eqref{Eq:ExpIneq1}, 
\begin{align}
\label{Eq:ExpIneq2}
	\Pi_{W_{\alpha}}(\widetilde \Wcal_n)
	\ge
	1- e^{-(K+1)n\varepsilon_{\alpha_0,n}^2}.
\end{align}
We conclude showing that, choosing sufficiently large $A_1>0$,
\begin{align}
\label{Eq:DesiredIncl}
	\widetilde \Wcal_n \subseteq \{w = w^{(1)} + w^{(2)}  :  \|w^{(1)}\|_ 1
	 \le A_1n\varepsilon_{\alpha,n}^2\varepsilon_{\alpha_*,n},\ \| w^{(2)}\|_{B^{\alpha_*+d}_{1}}
	\le A_1n^2\varepsilon_{\alpha,n}^2\varepsilon_{\alpha_*,n}^2\}
\end{align}
for all $\alpha\in [\alpha_*,\log n]$ and all $n\in \N$ large enough. To this aim, we start with the high-regularity case, $\alpha\in[ \alpha_* + d,\log n]$. Then 
$$
	\| \widetilde w^{(2)}\|_{B^{\alpha_*+d}_{1}}
	\le \| \widetilde w^{(2)}\|_{B^{\alpha}_{1}}
	\le \widetilde R n\varepsilon_{\alpha_*,n}^2
	\le
	\widetilde R n^2\varepsilon_{\alpha,n}^2\varepsilon_{\alpha_*,n}^2
$$
since $n\varepsilon_{\alpha,n}^2\to\infty$. The inclusion \eqref{Eq:DesiredIncl} thus follows with $w^{(1)} = \widetilde w^{(1)},\ w^{(2)} = \widetilde w^{(2)}$, and $R=\widetilde R$. For the remaining range $\alpha\in[\alpha_*, \alpha_* + d)$, we consider the wavelet projection $P_{L_n}\widetilde w^{(2)}$ of $\widetilde w^{(2)}$ at resolution level $L_n \simeq n^\frac{d^2}{(2 \alpha + d)(\alpha_*+d-\alpha)}$. Then,
\begin{align*}
	\|P_{L_n}\widetilde w^{(2)}\|_{B^{\alpha_*+d}_{1}}
	\le
	L_n^{\frac{\left(\alpha_*+d - \alpha \right)}{d} }\|\widetilde w^{(2)}\|_{B^{\alpha}_{1}}
	\lesssim 
	n^\frac{d}{(2 \alpha + d)} n\varepsilon_{\alpha_*,n}^2  
	=
	n^2\varepsilon_{\alpha,n}^2\varepsilon_{\alpha_*,n}^2,
\end{align*}
and, using the continuous embedding of $B^0_{1}$ into $L^1$ (e.g., eq.~(21), p.169 in \cite{ST87}),
\begin{align*}
	\|\widetilde w^{(2)} - P_{L_n}\widetilde w^{(2)}\|_ 1
	&\lesssim\|\widetilde w^{(2)} - P_{L_n}\widetilde w^{(2)}\|_{B^0_{1}}\\
	&\le L_n^{-\frac{\alpha}{d}} 
    \|\widetilde w^{(2)}\|_{B^{\alpha}_{1}}\\
	&\lesssim
	n^{-\frac{d\alpha}{(2 \alpha + d)(\alpha_*+d-\alpha)}}
	n^\frac{d}{2\alpha_*+d}
%	&=n^\frac{-2d\alpha \alpha_* - d^2 \alpha + 2d\alpha \alpha_* +2d^2 \alpha - 2ds^2 + d^2\alpha_* + d^3 - d^2 \alpha}
%	{(2 \alpha + 1)(2\alpha_*+1)(\alpha_*+d-\alpha)}\\
	= n^\frac{d^3 + \alpha_*d^2 - 2\alpha^2 d}{(2 \alpha + d)(2\alpha_*+d)(\alpha_*+d-\alpha)}.
\end{align*}
The inclusion \eqref{Eq:DesiredIncl} follows upon showing that the right hand side is smaller than 
$$
	n\varepsilon_{\alpha,n}^2\varepsilon_{\alpha_*,n} 
	= n^{\frac{d}{2 \alpha+d}}n^{-\frac{\alpha_*}{2\alpha_*+d}}
	= n^\frac{d^2 + \alpha_* d -2\alpha \alpha_* }{(2\alpha_*+d)(2 \alpha+d)}
	=n^\frac{d^3 + \alpha_* d^2  - \alpha^2 d - \alpha \alpha_* d  + 2\alpha^2 \alpha_* - 2 \alpha \alpha_*^2}
	{(2 \alpha + d)(2\alpha_*+d)(\alpha_*+d-\alpha)}.
$$
Indeed, the difference between the numerators of the exponents equals
\begin{align*}
	\Delta(\alpha)
	&= d^3 + \alpha_*d^2 - 2\alpha^2 d 
	- d^3 - \alpha_* d^2  + \alpha^2 d + \alpha \alpha_* d  - 2\alpha^2 \alpha_* + 2 \alpha \alpha_*^2\\
	&= -(2\alpha_* + d)\alpha^2
	+ (2\alpha_*^2 + d \alpha_*)  \alpha,
\end{align*}
which, as a function of $\alpha$, is a downward-pointing parabola with maximum attained at
$$
	\alpha_v := \frac{2\alpha_*^2 + d\alpha_* }{4\alpha_*+d} < \alpha_*.
$$
Since $\Delta(\alpha)$ is decreasing for $\alpha>\alpha_v$, for all $\alpha\in[\alpha_*,\alpha_*+d]$, 
\begin{align*}
	\Delta(\alpha)&\le
	\Delta(\alpha_*)\\
	&=
	-(2\alpha_* + d)\alpha_*^2
	+ (2\alpha_*^2 + d \alpha_*)  \alpha_* = 0.
\end{align*}
This shows as required that $\|\widetilde w^{(2)} - P_{L_n}\widetilde w^{(2)}\|_ 1\lesssim n\varepsilon_{\alpha,n}^2 \varepsilon_{\alpha_*,n}$, so that taking $w^{(1)} := \widetilde w^{(1)} + (\widetilde w^{(2)} - P_{L_n}\widetilde w^{(2)}) $ and $w^{(2)} := P_{L_n}\widetilde w^{(2)}$, the desired inclusion \eqref{Eq:DesiredIncl} follows for large enough $A_1>0$. By \eqref{Eq:ExpIneq2}, we then conclude
\begin{align*}
	\Pi_{W_{\alpha,n}}(\Wcal_n)
%	&=
%	\Pi_{W_{\alpha}}\big(w = w^{(1)} + w^{(2)} \ 
%		: \|w^{(1)}\|_ 1\le L\varepsilon_{\alpha_*,n}n\varepsilon_{\alpha,n}^2,
%		\ \|w^{(2)}\|_{B^{\alpha_* + 1/p}_{1}}\le M(n\varepsilon_{\alpha_*,n}^2)^\frac{1}{p}
%		n\varepsilon_{\alpha,n}^2\big)\\
	&\ge
	\Pi_{W_{\alpha}}(\widetilde \Wcal_n)
	\ge
		1 - e^{-(K+1)n\varepsilon_{\alpha_0,n}^2}.
\end{align*}
Finally, combined with \eqref{Eq:IntBound1}, this yield
\begin{align*}
	\Pi_{W_n}(\Wcal_n^c)
	&\le e^{-(K+1)n\varepsilon_{\alpha_0,n}^2} + 
	\int_{\alpha_*}^{\log n}e^{-(K+1)n\varepsilon_{\alpha_0,n}^2}\sigma_n(\alpha)d\alpha\\
	&\le 2e^{-(K+1)n\varepsilon_{\alpha_0,n}^2}
	\le e^{-Kn\varepsilon_{\alpha_0,n}^2}.
\end{align*}
\end{proof}%-----------------------------------------------------------------------------------------------

%

%%=============================================%%
%% For submissions to Nature Portfolio Journals %%
%% please use the heading ``Extended Data''.   %%
%%=============================================%%

%%=============================================================%%
%% Sample for another appendix section			       %%
%%=============================================================%%

%% \section{Example of another appendix section}\label{secA2}%
%% Appendices may be used for helpful, supporting or essential material that would otherwise 
%% clutter, break up or be distracting to the text. Appendices can consist of sections, figures, 
%% tables and equations etc.

%%===========================================================================================%%
%% If you are submitting to one of the Nature Portfolio journals, using the eJP submission   %%
%% system, please include the references within the manuscript file itself. You may do this  %%
%% by copying the reference list from your .bbl file, paste it into the main manuscript .tex %%
%% file, and delete the associated \verb+\bibliography+ commands.                            %%
%%===========================================================================================%%

%\bibliographystyle{plain}
%% if required, the content of .bbl file can be included here once bbl is generated
%%\input sn-article.bbl

\end{document}